\documentclass[12pt]{amsart}
%%%%%%%%%%%%%%%%%%%%%%%%%%%%%%%%%%%%%%%%%%%%%%%%%%%%%%%%%%%%%%%%%%%%%%%%%%%%%%%%%%%%%%%%%%%%%%%%%%%%%%%%%%%%%%%%%%%%%%%%%%%%%%%%%%%%%%%%%%%%%%%%%%%%%%%%%%%%%%%%%%%%%%%%%%%%%%%%%%%%%%%%%%%%%%%%%%%%%%%%%%%%%%%%%%%%%%%%%%%%%%%%%%%%%%%%%%%%%%%%%%%%%%%%%%%%
\usepackage{amsfonts}
\usepackage{amssymb}
\usepackage{amsmath}

\setcounter{MaxMatrixCols}{10}
%TCIDATA{OutputFilter=LATEX.DLL}
%TCIDATA{Version=5.50.0.2890}
%TCIDATA{<META NAME="SaveForMode" CONTENT="1">}
%TCIDATA{BibliographyScheme=Manual}
%TCIDATA{LastRevised=Wednesday, May 09, 2018 17:00:11}
%TCIDATA{<META NAME="GraphicsSave" CONTENT="32">}

\newtheorem{theorem}{Theorem}[section]
\newtheorem{lemma}{Lemma}[section]
\newtheorem{proposition}{Proposition}[section]

\newtheorem{definition}{Definition}[section]
\newtheorem{example}{Example}[section]
\numberwithin{equation}{section}
\def\({\left ( }
\def\){\right )}
\def\<{\left < }
\def\>{\right >}

\setcounter{page}{1}

\begin{document}
\title[Some Types of Lightlike Submanifolds]{Screen Transversal Lightlike
Submanifolds of Golden Semi-Riemannian Manifolds}
\author{Feyza Esra Erdo\u{g}an(ADIYAMAN)}
\address{Faculty of Education, Department of Mathematics, Ad\i yaman
University, 02040 Ad\i yaman, TURKEY}
\email{ferdogan@adiyaman.edu.tr}

\begin{abstract}
The main purpose of the present paper is to
study the geometry of screen transversal lightlike submanifolds and radical
screen transversal lightlike submanifolds and screen transversal
anti-invariant lightlike submanifolds of Golden Semi-Riemannian manifolds.
We investigate the geometry of distributions and obtain necessary and
sufficient conditions for the induced connection on these manifolds to be
metric connection. We also obtain characterization of screen transversal
anti-invariant lightlike submanifolds of Golden Semi- Riemannian manifolds.
Finally, we give two examples.
\end{abstract}

\maketitle

\bigskip

\section{Introduction}

The Golden proportion, also called the Golden ratio, Divine ratio, Golden
section or Golden mean, has been well known since the time of Euclid. Many
objects alive in the natural world that possess pentagonal symmetry, such as
inflorescence of many flowers and prophylaxis objects have a numerical
description given by the Fibonacci numbers which are themselves based on the
Golden proportion. The Golden proportion has also been found in the
structure of musical compositions, in the ratios of harmonious sound
frequencies and in dimensions of the human body. From ancient times it has
played an important role in architecture and visual arts. \indent {}\newline
\noindent \line(1,0){100}\newline

{\footnotesize \textit{Key words and phrases}: Lightlike manifold, golden
semi Riemannian manifold, radical transversal lightlike submanifold, radical
screen transversal submanifold }\newline
{\footnotesize \textit{2010 Mathematics Subject Classification}. 53C15,
53C40, 53C50.}\newline
\newpage The Golden proportion and the Golden rectangle (which is spanned by
two sides in the Golden proportion) have been found in the harmonious
proportion of temples, churches, statues, paintings, pictures and fractals.
Golden structure was revealed by the golden proportion, which was
characterized by Johannes Kepler(1571-1630). The number $\phi $, which is
the real positive root of the equation%
\begin{equation*}
x^{2}-x-1=0,
\end{equation*}%
(hence, $\phi =\frac{1+\sqrt{5}}{2}\approx 1,618...$) is the golden
proportion.

The golden Ratio is fascinating topic that continually generated news ideas.
Golden Riemannian manifolds were introduced by Grasmereanu and Hretcanu\cite%
{GH}using Golden ratio. The authors also studied invariant submanifolds of
golden Riemannian manifold and obtained interesting result in \cite{GH2},%
\cite{GH3}. The integrability of such golden structures was also
investigated by Gezer, Cengiz and Salimov in \cite{GCS}. Moreover, the
harmonious of maps between golden Riemannian manifolds was studied in \cite%
{BM}. Furthermore, Erdo\u{g}an and Y\i ld\i r\i m in \cite{EC} studied
semi-invariant and totally umbilical semi invariant submanifolds of Golden
Riemannian manifolds, respectively. A semi-Riemannian manifold endowed with
a golden structure will be called a Golden semi-Riemannian manifold.
Precisely we can say that on (1,1)-tensor field $\breve{P}$ on a
m-dimensional semi-Riemannian manifold $(\breve{N},\breve{g})$ is Golden
structure if it satisfies the equation $\breve{P}^{2}=\breve{P}+I$, where $I$
is identity map on $\breve{N}$. Furthermore, 
\begin{equation*}
\breve{g}(\breve{P}W,U)=\breve{g}(W,\breve{P}U)
\end{equation*}%
the semi-Riemannian metric is called $\breve{P}$-compatible and $(\breve{N},%
\breve{g},\breve{P})$ is named a Golden semi-Riemannian manifold\cite{M}.

The theory of lightlike submanifolds of semi-Riemannian manifolds has a
important place in differential geometry. Lightlike submanifolds of a
semi-Riemannian manifolds has been studied by Duggal-Bejancu\cite{DB} and
Duggal and \c{S}ahin \cite{DS}. Indeed, lightlike submanifolds appear in
general relativity as some smooth parts of horizons of the Kruskal and Keer
black holes\cite{N}. Many authors studied the lightlike submanifolds in
different space, for example \cite{FC},\cite{BC},\cite{SBE} and \cite{FBR}.
Additionally, in a resent study, studied lightlike hypersurfaces of Golden
semi-Riemannian manifolds\cite{NE}.

Considering above information, in this paper, we introduce lightlike
submanifolds of Golden semi-Riemannian manifolds and studied their geometry.
The paper is organized as follows: In section2, we give basic information
needed for this paper. In section3 and section4 , we introduce Golden
semi-Riemannian manifold a long with its subclasses ( radical screen
transversal and screen transversal anti-invariant lightlike submanifolds)
and obtain some characterizations. We investigate the geometry of
distiributions and find necessary and sufficient conditions for induced
connection to be metric connection. Furthermore, we give two examples.

\section{Preliminaries}

A submanifold $\acute{N}^{m}$ immersed in a semi-Riemannian manifold $(%
\breve{N}^{m+k},\breve{g})$ is called a lightlike submanifold if it admits a
degenerate metric $g$\ induced from $\breve{g}$ whose radical distribution
which is a semi-Riemannian complementary distribution of $RadT\acute{N}$ is
of rank $r$, where $1\leq r\leq m.$ $RadT\acute{N}=T\acute{N}\cap T\acute{N}%
^{\perp }$ , where 
\begin{equation}
T\acute{N}^{\perp }=\cup _{x\in \acute{N}}\left\{ u\in T_{x}\breve{N}\mid 
\breve{g}(u,v)=0,\forall v\in T_{x}\acute{N}\right\} .
\end{equation}%
Let $S(T\acute{N})$ be a screen distribution which is a semi-Riemannian
complementary distribution of $RadT\acute{N}$ in $T\acute{N}$. i.e., $T%
\acute{N}=RadT\acute{N}\perp S(T\acute{N}).$

We consider a screen transversal vector bundle $S(T\acute{N}^{\bot }),$
which is a semi-Riemannian complementary vector bundle of $RadT\acute{N}$ in 
$T\acute{N}^{\bot }.$ Since, for any local basis $\left\{ \xi _{i}\right\} $
of $RadT\acute{N}$, there exists a lightlike transversal vector bundle $ltr(T%
\acute{N})$ locally spanned by $\left\{ N_{i}\right\} $ \cite{DB}. Let $tr(T%
\acute{N})$ be complementary ( but not orthogonal) vector bundle to $T\acute{%
N}$ in $T\breve{N}^{\perp }\mid _{\acute{N}}$. Then%
\begin{eqnarray*}
tr(T\acute{N}) &=&ltrT\acute{N}\bot S(T\acute{N}^{\bot }), \\
T\breve{N} &\mid &_{\acute{N}}=S(T\acute{N})\bot \lbrack RadT\acute{N}\oplus
ltrT\acute{N}]\perp S(T\acute{N}^{\bot }).
\end{eqnarray*}%
Although $S(T\acute{N})$ is not unique, it is canonically isomorphic to the
factor vector bundle $T\acute{N}/RadT\acute{N}$ \cite{DB}. The following
result is important for this paper.

\begin{proposition}
\cite{DB}. The lightlike second fundamental forms of a lightlike submanifold 
$N$ do not depend on $S(T\acute{N}),$ $S(T\acute{N}^{\perp })$ and $ltr(T%
\acute{N}).$
\end{proposition}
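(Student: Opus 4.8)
The plan is to rewrite the lightlike second fundamental form so that the auxiliary bundles no longer appear, after which invariance is automatic. First I would record the Gauss formula $\breve{\nabla}_X Y = \nabla_X Y + h(X,Y)$ for $X,Y \in \Gamma(T\acute{N})$, where $\breve{\nabla}$ is the Levi-Civita connection of $\breve{g}$, the induced object $\nabla_X Y$ lies in $\Gamma(T\acute{N})$, and $h(X,Y) \in \Gamma(tr(T\acute{N}))$. Using $tr(T\acute{N}) = ltr(T\acute{N}) \perp S(T\acute{N}^{\perp})$ I would split $h = h^l + h^s$ with $h^l(X,Y) \in \Gamma(ltr(T\acute{N}))$ and $h^s(X,Y) \in \Gamma(S(T\acute{N}^{\perp}))$, so that the lightlike second fundamental form is the scalar form $B(X,Y) = \breve{g}(h^l(X,Y),\xi)$ obtained by pairing against a local section $\xi \in \Gamma(RadT\acute{N})$.

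The key step is to pair the Gauss formula directly with $\xi$ and check that every term except the lightlike one drops out. The tangential contribution vanishes, $\breve{g}(\nabla_X Y, \xi)=0$, because $\nabla_X Y \in \Gamma(T\acute{N})$ while $\xi \in \Gamma(RadT\acute{N}) = \Gamma(T\acute{N} \cap T\acute{N}^{\perp}) \subseteq \Gamma(T\acute{N}^{\perp})$ is $\breve{g}$-orthogonal to all of $T\acute{N}$. The screen transversal contribution vanishes, $\breve{g}(h^s(X,Y),\xi)=0$, because in the orthogonal splitting $T\acute{N}^{\perp} = RadT\acute{N} \perp S(T\acute{N}^{\perp})$ the bundle $S(T\acute{N}^{\perp})$ is orthogonal to $RadT\acute{N}$. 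Hence only the $h^l$ term survives and one obtains the representation $B(X,Y) = \breve{g}(\breve{\nabla}_X Y, \xi)$.

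With this identity the conclusion follows at once: the right-hand side is assembled only from the ambient metric $\breve{g}$, its canonical connection $\breve{\nabla}$, the tangent fields $X,Y$, and the radical section $\xi$, and the radical distribution $RadT\acute{N} = T\acute{N}\cap T\acute{N}^{\perp}$ is intrinsically attached to $\acute{N} \subset \breve{N}$ with no reference to a complementary subbundle. Since none of $S(T\acute{N})$, $S(T\acute{N}^{\perp})$, or $ltr(T\acute{N})$ occurs on the right, the form $B$ is unaffected by any alternative choice of these bundles, and the same pairing argument disposes of each of the remaining lightlike second fundamental forms. The only point demanding care, and hence the main obstacle, is the bookkeeping of the two orthogonality relations above: one needs $RadT\acute{N} \subseteq T\acute{N}^{\perp}$ to annihilate the tangential term and $S(T\acute{N}^{\perp}) \perp RadT\acute{N}$ to annihilate the screen transversal term; once both pairings are seen to vanish, the stated independence is forced.
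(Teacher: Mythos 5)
Your argument is correct, and in fact there is nothing in the paper to compare it against: the proposition is quoted from Duggal--Bejancu \cite{DB} without proof. What you give is precisely the standard argument from that reference -- pairing the Gauss formula with a radical section $\xi$ kills the tangential term because $\xi\in\Gamma(T\acute{N}^{\perp})$ and kills the $h^{s}$ term because $S(T\acute{N}^{\perp})\subseteq T\acute{N}^{\perp}$ is orthogonal to $RadT\acute{N}\subseteq T\acute{N}$, leaving $\breve{g}(h^{\ell}(X,Y),\xi)=\breve{g}(\breve{\nabla}_{X}Y,\xi)$, which involves none of the auxiliary choices. You also correctly read the statement as being about the scalar-valued local forms $B_{i}(X,Y)=\breve{g}(h^{\ell}(X,Y),\xi_{i})$ rather than the vector-valued $h^{\ell}$ itself (whose target $ltr(T\acute{N})$ does change with the choices), so the proof is complete as written.
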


We say that a submanifold $(\acute{N},g,S(T\acute{N}),$ $S(T\acute{N}^{\perp
}))$ of $\breve{N}$ is

Case1: \ r-lightlike if $r<\min \{m,k\};$

Case2: \ Co-isotropic if $r=k<m;S(T\acute{N}^{\perp })=\{0\};$

Case3: \ Isotropic if $r=m=k;$ $S(T\acute{N})=\{0\};$

Case4: \ Totally lightlike if $r=k=m;$ $S(T\acute{N})=\{0\}=S(T\acute{N}%
^{\perp }).$\bigskip

The Gauss and Weingarten equations are:%
\begin{eqnarray}
\breve{\nabla}_{W}U &=&\nabla _{W}U+h(W,U),\forall W,U\in \Gamma (T\acute{N}%
),  \label{7} \\
\breve{\nabla}_{W}V &=&-A_{V}W+\nabla _{W}^{t}V,\forall W\in \Gamma (T\acute{%
N}),V\in \Gamma (tr(T\acute{N})),  \label{8}
\end{eqnarray}%
where $\left\{ \nabla _{W}U,A_{V}W\right\} $ and $\left\{ h\left( W,U\right)
,\nabla _{W}^{t}V\right\} $ belong to $\Gamma (T\acute{N})$ and $\Gamma (tr(T%
\acute{N})),$ respectively. $\nabla $ and $\nabla ^{t}$ are linear
connections on $\acute{N}$ and the vector bundle $tr(T\acute{N})$,
respectively. Moreover, we have%
\begin{eqnarray}
\breve{\nabla}_{W}U &=&\nabla _{W}U+h^{\ell }\left( W,U\right) +h^{s}\left(
W,U\right) ,\forall W,U\in \Gamma (T\acute{N}),  \label{9} \\
\breve{\nabla}_{W}N &=&-A_{N}W+\nabla _{W}^{\ell }N+D^{s}\left( W,N\right)
,N\in \Gamma (ltr(T\acute{N})),  \label{10} \\
\breve{\nabla}_{W}Z &=&-A_{Z}W+\nabla _{W}^{s}Z+D^{\ell }\left( W,Z\right)
,Z\in \Gamma (S(T\acute{N}^{\bot })).  \label{11}
\end{eqnarray}%
Denote the projection of $T\acute{N}$ on $S(T\acute{N})$ by $\breve{P}.$Then
by using (\ref{7}), (\ref{9})-(\ref{11}) and a metric connection $\breve{%
\nabla}$, we obtain%
\begin{eqnarray}
\bar{g}(h^{s}\left( W,U\right) ,Z)+\bar{g}\left( U,D^{\ell }\left(
W,Z\right) \right)  &=&g\left( A_{Z}W,U\right) ,  \label{12} \\
\bar{g}\left( D^{s}\left( W,N\right) ,Z\right)  &=&\bar{g}\left(
N,A_{Z}W\right) .  \label{13}
\end{eqnarray}%
From the decomposition of the tangent bundle of a lightlike submanifold, we
have%
\begin{eqnarray}
\nabla _{W}\breve{P}U &=&\nabla _{W}^{\ast }\breve{P}U+h^{\ast }(W,\breve{P}%
U),  \label{14} \\
\nabla _{W}\xi  &=&-A_{\xi }^{\ast }W+\nabla _{W}^{\ast t}\xi ,  \label{15}
\end{eqnarray}%
for $W,U\in \Gamma (T\acute{N})$ and $\xi \in \Gamma (RadT\acute{N}).$ By
using above equations, we obtain%
\begin{eqnarray}
g(h^{\ell }(W,\breve{P}U),\xi ) &=&g(A_{\xi }^{\ast }W,\breve{P}U),
\label{16} \\
g(h^{s}(W,\breve{P}U),N) &=&g(A_{N}W,\breve{P}U),  \label{17} \\
g(h^{\ell }(W,\xi ),\xi ) &=&0,\quad A_{\xi }^{\ast }\xi =0.  \label{18}
\end{eqnarray}%
In general, the induced connection $\nabla $ on $\acute{N}$ is not a metric
connection. Since $\breve{\nabla}$ is a metric connection, by using (\ref{9}%
) we get%
\begin{equation}
\left( \nabla _{W}g\right) \left( U,V\right) =\bar{g}\left( h^{\ell }\left(
W,U\right) ,V\right) +\bar{g}\left( h^{\ell }\left( W,V\right) ,U\right) .
\label{19}
\end{equation}%
However, to pay attention that $\nabla ^{\ast }$ is a metric connection on $%
S(T\acute{N})$.

\bigskip

Let $(N_{1},g_{1})$ and $(N_{2},g_{2})$ be two $m_{1}$ and $m_{2}$
-dimensional semi-Riemannian manifolds with constant indexes $q_{1}>0,$ $%
q_{2}>0,$ respectively. Let $\pi :N_{1}\times N_{2}\rightarrow N_{1}$ and $%
\sigma :N_{1}\times N_{2}\rightarrow N_{2}$ the projections which are given
by $\pi (w,u)=w$ and $\sigma (w,u)=u$ for $\ $any $(w,u)\in N_{1}\times
N_{2},$ respectively.

We denote the product manifold by $\breve{N}=(N_{1}\times N_{2},\breve{g}),$
where%
\begin{equation*}
\breve{g}(W,U)=g_{1}(\pi _{\ast }W,\pi _{\ast }U)+g_{2}(\sigma _{\ast
}W,\sigma _{\ast }U)
\end{equation*}%
for any $\forall W,U\in \Gamma (T\breve{N}).$ Then we have%
\begin{equation*}
\pi _{\ast }^{2}=\pi _{\ast },\newline
\pi _{\ast }\sigma _{\ast }=\sigma _{\ast }\pi _{\ast }=0,
\end{equation*}%
\begin{equation*}
\sigma _{\ast }^{2}=\sigma _{\ast }\newline
,\pi _{\ast }+\sigma _{\ast }=I,
\end{equation*}%
where $I$ is identity transformation. Thus $(\breve{N},\breve{g})$ is an $%
\left( m_{1}+m_{2}\right) $- dimensional semi-Riemannian manifold with
constant index $\left( q_{1}+q_{2}\right) .$ The semi-Riemannian product
manifold $\breve{N}=N_{1}\times N_{2}$ is characterized by $N_{1}$ and $N_{2}
$ are totally geodesic submanifolds of $\breve{N}.$

Now, if we put $F=\pi _{\ast }-\sigma _{\ast }$, then we can easily see that 
\begin{eqnarray*}
F.F &=&\left( \pi _{\ast }-\sigma _{\ast }\right) \left( \pi _{\ast }-\sigma
_{\ast }\right)  \\
F^{2} &=&\pi _{\ast }^{2}-\pi _{\ast }\sigma _{\ast }-\sigma _{\ast }\pi
_{\ast }+\sigma _{\ast }^{2}=I
\end{eqnarray*}%
\begin{equation*}
F^{2}=I,\newline
\bar{g}\left( FW,U\right) =\bar{g}\left( W,FU\right) 
\end{equation*}%
for any $W,U\in \Gamma (T\breve{N}).$ If we denote the levi-civita
connection on $\breve{N}$ by $\breve{\nabla},$ then it can be seen that $(%
\breve{\nabla}_{W}F)Y=0,$ for any $W,U\in \Gamma (T\breve{N}),$ that is, $F$
is parallel with respect to $\breve{\nabla}$ \cite{S}$.$

Let $(\breve{N},\breve{g})$ be a Semi-Riemannian manifold. Then $\breve{N}$
is called golden semi-Riemannian manifold if there exists an $(1,1)$ tensor
field $\breve{P}$ on $\breve{N}$ such that{\small \ }%
\begin{equation}
\breve{P}^{2}=\breve{P}+I  \label{20}
\end{equation}%
where $I$ is the identity map on $\breve{N}.$ Also, 
\begin{equation}
\breve{g}(\breve{P}W,U)=\breve{g}(W,\breve{P}U).  \label{21}
\end{equation}%
\newline
The semi-Riemannian metric (\ref{21}) is called $\breve{P}-$compatible and $(%
\breve{N},\breve{g},\breve{P})$ is named a golden semi-Riemannian manifold.
Also we have 
\begin{equation}
\breve{\nabla}_{W}\breve{P}U=\breve{P}\breve{\nabla}_{W}U  \label{21a}
\end{equation}%
\cite{BC}.

If $\breve{P}$ be a Golden structure, then (\ref{21}) is equivalent to 
\begin{equation}
\breve{g}(\breve{P}W,\breve{P}U)=\breve{g}(\breve{P}W,U)+\breve{g}(W,U)
\label{23}
\end{equation}%
for any $W,U\in \Gamma (T\breve{N}).$

If $F$ is almost product structure on $\breve{N}$, then%
\begin{equation}
\breve{P}=\frac{1}{\sqrt{2}}(I+\sqrt{5}F)  \label{24}
\end{equation}%
is a Golden structure on $\breve{N}.$ Conversely, if $\breve{P}$ is a Golden
structure on $\breve{N}$, then 
\begin{equation*}
F=\frac{1}{\sqrt{5}}(2\breve{P}-I)
\end{equation*}%
is an almost product structure on $\breve{N}$\cite{N}.

\section{Screen Transversal Lightlike Submanifolds Of Golden semi-Riemannian
Manifolds}

\begin{lemma}
Let $\acute{N}$ be a lightlike submanifold of golden semi-Riemannian
manifold. Also, let be%
\begin{equation*}
\breve{P}RadT\acute{N}\subseteq S(T\acute{N}^{\perp }).
\end{equation*}%
Then, $\breve{P}ltrT\acute{N}$ is also subvector bundle of screen
transversal vector bundle.
\end{lemma}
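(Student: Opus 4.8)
The plan is to fix a local adapted frame and verify, summand by summand, that $\breve{P}N$ has no component outside $S(T\acute{N}^{\perp})$. Concretely, I would choose local bases $\{\xi_{i}\}$ of $RadT\acute{N}$ and $\{N_{i}\}$ of $ltr(T\acute{N})$ normalized by $\breve{g}(\xi_{i},N_{j})=\delta_{ij}$ with $\breve{g}(\xi_{i},\xi_{j})=\breve{g}(N_{i},N_{j})=0$, and recall the orthogonal splitting $T\breve{N}\mid_{\acute{N}}=S(T\acute{N})\perp[RadT\acute{N}\oplus ltrT\acute{N}]\perp S(T\acute{N}^{\perp})$. Since $S(T\acute{N}^{\perp})$ is the $\breve{g}$-orthogonal complement (inside $T\breve{N}\mid_{\acute{N}}$) of $S(T\acute{N})\perp[RadT\acute{N}\oplus ltrT\acute{N}]$, to prove $\breve{P}N_{i}\in\Gamma(S(T\acute{N}^{\perp}))$ it suffices to show that $\breve{P}N_{i}$ is $\breve{g}$-orthogonal to $S(T\acute{N})$, to $RadT\acute{N}$ and to $ltr(T\acute{N})$ simultaneously.

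The orthogonality to $RadT\acute{N}$ is immediate from $\breve{P}$-compatibility (\ref{21}) together with the hypothesis: for $\xi\in\Gamma(RadT\acute{N})$ we have $\breve{g}(\breve{P}N,\xi)=\breve{g}(N,\breve{P}\xi)$, and since $\breve{P}\xi\in\Gamma(S(T\acute{N}^{\perp}))$ while $N\in\Gamma(ltr(T\acute{N}))$ lies in a summand orthogonal to $S(T\acute{N}^{\perp})$, this vanishes. Equivalently, $\breve{P}N$ carries no $ltr(T\acute{N})$-component. The same computation applied to $\breve{P}\xi$ shows, via (\ref{21}), that $\breve{P}$ sends $S(T\acute{N})$ to vectors with no $ltr(T\acute{N})$-component, a fact I would reuse below.

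For the two remaining orthogonality relations I would pass through the golden identity. Writing $\breve{g}(\breve{P}N,X)=\breve{g}(N,\breve{P}X)$ for $X\in\Gamma(S(T\acute{N}))$ and $\breve{g}(\breve{P}N,N')=\breve{g}(N,\breve{P}N')$ for $N'\in\Gamma(ltr(T\acute{N}))$, and noting that $N$ pairs nontrivially only with the $RadT\acute{N}$-part of a vector, both quantities reduce to detecting the $RadT\acute{N}$-components of $\breve{P}X$ and of $\breve{P}N'$. To force these to vanish I would feed $\breve{P}^{2}=\breve{P}+I$ from (\ref{20}) back into the hypothesis: applying $\breve{P}$ to $\breve{P}\xi\in\Gamma(S(T\acute{N}^{\perp}))$ gives $\breve{P}(\breve{P}\xi)=\breve{P}\xi+\xi$, which governs how $\breve{P}$ acts on the subbundle $\breve{P}(RadT\acute{N})\subseteq S(T\acute{N}^{\perp})$. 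Combined with (\ref{23}) one computes $\breve{g}(\breve{P}\xi,\breve{P}\xi')=0$ and $\breve{g}(\breve{P}N,\breve{P}\xi)=\breve{g}(N,\xi)=\delta_{ij}$, exhibiting $\breve{P}(RadT\acute{N})$ and $\breve{P}(ltr(T\acute{N}))$ as a mutually dual pair of lightlike subbundles inside the nondegenerate bundle $S(T\acute{N}^{\perp})$; this pairing is what identifies $\breve{P}N$ with an element of $S(T\acute{N}^{\perp})$ and excludes the screen and radical components.

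I expect the genuine obstacle to be exactly this last step. The radical-orthogonality (hence vanishing of the $ltr$-component of $\breve{P}N$) is purely formal, but the vanishing of the $S(T\acute{N})$- and $RadT\acute{N}$-components of $\breve{P}N$ does not follow from $\breve{P}$-compatibility (\ref{21}) alone: it genuinely needs the quadratic relation (\ref{20})--(\ref{23}), the nondegeneracy of $S(T\acute{N}^{\perp})$, and the total degeneracy of $\breve{P}(RadT\acute{N})$ established above. I would therefore dispatch the radical direction first as a warm-up and then devote the main effort to the screen and lightlike-transversal directions, where the interplay between (\ref{20}) and the standing hypothesis $\breve{P}RadT\acute{N}\subseteq S(T\acute{N}^{\perp})$ does the decisive work.
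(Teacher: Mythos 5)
Your reduction of the claim to showing that $\breve{P}N$ is $\breve{g}$-orthogonal to $S(T\acute{N})$ and to $RadT\acute{N}\oplus ltr(T\acute{N})$ is the right way to formalize it, and your first step is correct: $\breve{g}(\breve{P}N,\xi)=\breve{g}(N,\breve{P}\xi)=0$ because $\breve{P}\xi\in\Gamma(S(T\acute{N}^{\perp}))$ is orthogonal to $ltr(T\acute{N})$, so $\breve{P}N$ has no $ltr(T\acute{N})$-component. This is essentially the same computation that drives the paper's argument; the paper, however, proceeds by supposing $\breve{P}N$ lies \emph{wholly} in $ltr(T\acute{N})$, then wholly in $S(T\acute{N})$, then wholly in $RadT\acute{N}$, deriving a contradiction each time and concluding from the decomposition that $\breve{P}N\in\Gamma(S(T\acute{N}^{\perp}))$. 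That case analysis never addresses the possibility that $\breve{P}N$ is a sum of components from several summands --- precisely the situation your component-by-component approach is designed to handle, and precisely where both arguments stall.

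The step you flag as the ``genuine obstacle'' is indeed a genuine gap, and it cannot be closed with the tools you list. The identities $\breve{g}(\breve{P}\xi,\breve{P}\xi')=0$ and $\breve{g}(\breve{P}N,\breve{P}\xi)=\breve{g}(N,\xi)$ constrain only the $S(T\acute{N}^{\perp})$-component of $\breve{P}N$: since $RadT\acute{N}$ and $S(T\acute{N})$ are $\breve{g}$-orthogonal to $S(T\acute{N}^{\perp})$, pairing against $\breve{P}\xi$ is blind to them, so exhibiting $\breve{P}(RadT\acute{N})$ and part of $\breve{P}(ltrT\acute{N})$ as dual lightlike subbundles of $S(T\acute{N}^{\perp})$ shows that $\breve{P}N$ has a nonzero screen-transversal part but excludes nothing else. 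Nor does the quadratic relation help: writing $\breve{P}N=\rho+s+w$ with $\rho\in\Gamma(RadT\acute{N})$, $s\in\Gamma(S(T\acute{N}))$, $w\in\Gamma(S(T\acute{N}^{\perp}))$ and substituting into $\breve{P}^{2}N=\breve{P}N+N$ yields no relation forcing $\rho$ or $s$ to vanish (one only gets, e.g., $\breve{g}(s,s)+\breve{g}(w,w)=\breve{g}(\breve{P}N,N)$). In fact one can construct a pointwise model --- a self-adjoint $\breve{P}$ with $\breve{P}^{2}=\breve{P}+I$ on a six-dimensional scalar product space of signature $(4,2)$ containing a two-dimensional $1$-lightlike subspace --- in which $\breve{P}\xi$ lies in $S(T\acute{N}^{\perp})$ while $\breve{P}N$ acquires nonzero $RadT\acute{N}$- and $S(T\acute{N})$-components. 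So the missing vanishing is not a consequence of (\ref{20})--(\ref{23}) together with the hypothesis $\breve{P}RadT\acute{N}\subseteq S(T\acute{N}^{\perp})$, and no argument along your lines (or the paper's) can establish the statement without an additional assumption on $\breve{P}$ or on the choice of screen bundles.
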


\begin{proof}
Let us accept the reversal of hypothesis. Namely, $ltrT\acute{N}$ is
invariant with respect to $\breve{P},$i.e. $\breve{P}ltrT\acute{N}=ltrT%
\acute{N}.$ From definition of lightlike submanifold, we have 
\begin{equation*}
\breve{g}(N,\xi )=1,
\end{equation*}%
for $\xi \in \Gamma (RadT\acute{N})$ and $N\in \Gamma (ltrT\acute{N}).$Also
from (\ref{23}), we find that%
\begin{equation*}
\breve{g}(N,\xi )=\breve{g}(\breve{P}N,\breve{P}\xi )=1.
\end{equation*}%
However, $\breve{P}N\in \Gamma (ltrT\acute{N})$ then by hypothesis, we get $%
\breve{g}(\breve{P}N,\breve{P}\xi )=0.$ Hence, we obtain a contradiction
which implies that $\breve{P}N$ does not belong to $ltrT\acute{N}$. Now, we
accept $\breve{P}N\in \Gamma (S(T\acute{N})).$ From here, we obtain that 
\begin{equation*}
1=\breve{g}(N,\xi )=\breve{g}(\breve{P}N,\breve{P}\xi )=0.
\end{equation*}%
So it is a contradiction. We get the same contradiction, when we assume $%
\breve{P}N\in \Gamma (RadT\acute{N}).$Thus, $\breve{P}N$ does not belong to $%
S(T\acute{N})$ and also $RadT\acute{N}.$Then, from the decomposition of a
lightlike submanifold, we conclude that $\breve{P}N\in \Gamma (S(T\acute{N}%
^{\perp })).$
\end{proof}

\begin{definition}
Let $\acute{N}$ be a lightlike submanifold of golden semi-Riemannian
manifold $\breve{N}.$ If 
\begin{equation*}
\breve{P}RadT\acute{N}\subset S(T\acute{N}^{\perp }),
\end{equation*}%
then, $\acute{N}$ is a screen transversal lightlike submanifold of $\breve{N}
$ golden semi-Riemannian manifold.
\end{definition}

\begin{definition}
Let $\acute{N}$ be a screen transversal lightlike submanifold of golden
semi-Riemannian manifold $\breve{N}.$ Then
\end{definition}

\begin{enumerate}
\item If $\breve{P}(S(T\acute{N}))=S(T\acute{N}),$ then, we say that $\acute{%
N}$ is a radical screen transversal lightlike submanifold of $\breve{N}.$

\item If $\breve{P}(S(T\acute{N}))\subset S(T\acute{N}^{\perp }),$ then, we
say that $\acute{N}$ is a screen transversal anti-invariant submanifold of $%
\breve{N}.$
\end{enumerate}

If $\acute{N}$ is a screen transversal anti-invariant submanifold of $\breve{%
N}$, then, we have 
\begin{equation*}
S(T\acute{N}^{\perp })=\breve{P}RadT\acute{N}\oplus \breve{P}ltrT\acute{N}%
\oplus \breve{P}(S(T\acute{N}))\perp D_{o}.
\end{equation*}%
In here $D_{o}$ is orthogonal complement non-degenerate distribution to $%
\breve{P}RadT\acute{N}\oplus \breve{P}ltrT\acute{N}\oplus \breve{P}(S(T%
\acute{N}))$.

\begin{proposition}
Let $\acute{N}$ be a screen transversal anti-invariant lightlike submanifold
of golden semi-Riemannian manifold $\breve{N}$. Then the distribution $D_{o}$
is invariant with respect to $\breve{P}.$
\end{proposition}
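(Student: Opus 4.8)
The plan is to take an arbitrary $X\in\Gamma(D_o)$ and show $\breve{P}X\in\Gamma(D_o)$. Since $D_o$ sits inside $S(T\acute{N}^{\perp})$ as the orthogonal complement of $E:=\breve{P}RadT\acute{N}\oplus\breve{P}ltrT\acute{N}\oplus\breve{P}(S(T\acute{N}))$, I would split the verification into two stages: first that $\breve{P}X$ has no component in $S(T\acute{N})$, $RadT\acute{N}$ or $ltrT\acute{N}$ (so that $\breve{P}X\in S(T\acute{N}^{\perp})$), and then that $\breve{P}X$ is $\breve{g}$-orthogonal to each of the three pieces making up $E$.

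For the first stage I would use the compatibility relation (\ref{21}) to transfer $\breve{P}$ across the metric. For $\xi\in\Gamma(RadT\acute{N})$, $N\in\Gamma(ltrT\acute{N})$ and $Z\in\Gamma(S(T\acute{N}))$ one has $\breve{g}(\breve{P}X,\xi)=\breve{g}(X,\breve{P}\xi)$, $\breve{g}(\breve{P}X,N)=\breve{g}(X,\breve{P}N)$ and $\breve{g}(\breve{P}X,Z)=\breve{g}(X,\breve{P}Z)$. By the screen transversal hypothesis $\breve{P}\xi\in\breve{P}RadT\acute{N}$ and by anti-invariance $\breve{P}Z\in\breve{P}(S(T\acute{N}))$, while the Lemma proved above places $\breve{P}N\in\breve{P}ltrT\acute{N}\subset S(T\acute{N}^{\perp})$; all three land in $E$, to which $X\in\Gamma(D_o)$ is orthogonal, so each inner product vanishes. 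Because $S(T\acute{N})$ is non-degenerate and the $RadT\acute{N}$--$ltrT\acute{N}$ pairing is non-degenerate, these vanishings force the components of $\breve{P}X$ in $S(T\acute{N})$, $RadT\acute{N}$ and $ltrT\acute{N}$ to be zero, i.e. $\breve{P}X\in\Gamma(S(T\acute{N}^{\perp}))$.

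For the second stage I would apply the golden identity (\ref{23}) to each pairing of $\breve{P}X$ with $E$. For instance $\breve{g}(\breve{P}X,\breve{P}\xi)=\breve{g}(\breve{P}X,\xi)+\breve{g}(X,\xi)$; the first term is $0$ by the orthogonality $\breve{g}(\breve{P}X,\xi)=0$ established in the first stage, and the second is $0$ since $X\in S(T\acute{N}^{\perp})$ is orthogonal to $RadT\acute{N}$. The same computation with $N$ and with $Z$ in place of $\xi$ yields $\breve{g}(\breve{P}X,\breve{P}N)=0$ and $\breve{g}(\breve{P}X,\breve{P}Z)=0$. Hence $\breve{P}X\in\Gamma(S(T\acute{N}^{\perp}))$ is orthogonal to all of $E$, which is exactly the condition $\breve{P}X\in\Gamma(D_o)$; as $X$ was arbitrary, $D_o$ is invariant under $\breve{P}$.

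The computations themselves are routine; the only real care needed is the bookkeeping forced by the degenerate metric. The step I expect to be the genuine crux is invoking the Lemma to guarantee $\breve{P}N\in S(T\acute{N}^{\perp})$ --- this does not follow from the definition of a screen transversal anti-invariant submanifold alone, and it is precisely what prevents $\breve{P}X$ from acquiring a spurious radical component --- together with remembering that in this signature the $RadT\acute{N}$-component of a vector is detected by pairing against $ltrT\acute{N}$ and conversely, which is what licenses the conclusions of the first stage.
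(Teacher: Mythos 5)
Your proposal is correct and follows essentially the same route as the paper: both arguments pair $\breve{P}X$ against $\xi$, $N$, $Z$ and against $\breve{P}\xi$, $\breve{P}N$, $\breve{P}Z$, using the compatibility relation (\ref{21}) for the first three and the golden identity (\ref{23}) for the last three, with the Lemma supplying $\breve{P}N\in S(T\acute{N}^{\perp})$. Your write-up is in fact slightly more careful than the paper's, since you make explicit the non-degeneracy and duality-pairing facts needed to pass from the vanishing of these inner products to the vanishing of the corresponding components.
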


\begin{proof}
Using (\ref{21}), we obtain%
\begin{equation*}
\breve{g}(\breve{P}U,\xi )=g(U,\breve{P}\xi )=0,
\end{equation*}%
which show that $\breve{P}U$ does not belong to $ltrT\acute{N},$%
\begin{eqnarray*}
\breve{g}(\breve{P}U,N) &=&\breve{g}(U,\breve{P}N)=0, \\
\breve{g}(\breve{P}U,\breve{P}\xi ) &=&\breve{g}(U,\breve{P}\xi )+\breve{g}%
(U,\xi )=0, \\
\breve{g}(\breve{P}U,\breve{P}N) &=&0, \\
\breve{g}(\breve{P}U,W) &=&\breve{g}(U,\breve{P}W)=0, \\
\breve{g}(\breve{P}U,\breve{P}W) &=&0
\end{eqnarray*}%
for $U\in \Gamma (D_{o}),$ $\xi \in \Gamma (RadT\acute{N}),$ $N\in \Gamma
(ltrT\acute{N})$ and $W\in \Gamma (S(T\acute{N})).$ Therefore, the
distribution $D_{o}$ is invariant with respect to $\breve{P}.$
\end{proof}

Let $\acute{N}$ be screen transversal anti-invariant lightlike submanifold
of golden semi-Riemannian manifold $\breve{N}.$ Let $S$ and $R$ be
projection transformations of $S(T\acute{N})$ and $RadT\acute{N}$,
respectively. Then, for $W\in \Gamma (T\acute{N}),$ we have 
\begin{equation}
W=SW+RW,  \label{25}
\end{equation}%
on the other hand, if we apply $\breve{P}$ to (\ref{25}), we obtain%
\begin{equation*}
\breve{P}W=S_{1}W+S_{2}W,
\end{equation*}%
where $S_{1}W=\breve{P}SW\in \Gamma (S(T\acute{N}^{\perp })),$ $S_{2}W=%
\breve{P}RW\in \Gamma (S(T\acute{N}^{\perp }))$.

Let $\ P_{1},P_{2},P_{3},P_{4}$ be the projection morphisms on $\breve{P}RadT%
\acute{N},$ $\breve{P}ltrT\acute{N},$ $\breve{P}(S(T\acute{N})),$ $D_{o}$
respectively. Then, for $U\in \Gamma (S(T\acute{N}^{\perp })),$ we have 
\begin{equation}
U=P_{1}U+P_{2}U+P_{3}U+P_{4\text{ }}U.  \label{26}
\end{equation}%
If we apply $\breve{P}$ to (\ref{26}), then, we can find 
\begin{equation}
\breve{P}U=\breve{P}P_{1}U+\breve{P}P_{2}U+\breve{P}P_{3}U+\breve{P}P_{4%
\text{ }}U.  \label{27}
\end{equation}%
If we get $B_{1}=\breve{P}P_{1},$ $B_{2}=\breve{P}P_{2},$ $C_{1}=\breve{P}%
P_{3},$ $C_{2}=\breve{P}P_{4\text{ }}$, we can rewrite (\ref{27}) as%
\begin{equation*}
\breve{P}U=B_{1}U+B_{2}U+C_{1}U+C_{2}U.
\end{equation*}%
In here, there are components of $B_{1}U$ at$\ \Gamma (RadT\acute{N})$ with $%
\Gamma (S(T\acute{N}^{\perp }))$ and of $B_{2}U$ at $\Gamma (S(T\acute{N}))$
with $\Gamma (S(T\acute{N}^{\perp }))$ and of $C_{1}U$ at $\Gamma (ltrT%
\acute{N})$ with $\Gamma (S(T\acute{N}^{\perp }))$ and of $C_{2}U$ at $D_{o}$
with $\Gamma (S(T\acute{N}^{\perp })),$ Namely $\breve{P}U$ is belong to $T%
\breve{N}\mid _{\acute{N}}.$

It is known that the induced connection on a screen transversal
anti-invariant lightlike submanifold immersed in semi-Riemannian manifolds
is not a metric connection. The condition under which the induced connection
on the submanifold is a metric connection is given by the following theorem.

\begin{theorem}
Let $\acute{N}$ be a screen transversal anti-invariant lightlike submanifold
of golden semi-Riemannian manifold $\breve{N}$. Then, the induced connection 
$\nabla $ on $\acute{N}$ is a metric connection if and only if 
\begin{equation*}
K_{1}B_{1}\nabla _{W}^{s}\breve{P}\xi =0,
\end{equation*}%
for $W\in \Gamma (T\acute{N})$ and $\xi \in \Gamma (RadT\acute{N}).$
\end{theorem}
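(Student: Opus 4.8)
The plan is to reduce the metric-connection property to the vanishing of the lightlike second fundamental form on the radical bundle, and then to transport that vanishing condition through the Golden structure by exploiting its parallelism. First I would recall, from (\ref{19}), that the induced connection $\nabla$ can fail to be metric only through the $ltrT\acute{N}$-valued form $h^{\ell}$: putting $U=\xi\in\Gamma(RadT\acute{N})$ in (\ref{19}), and using (\ref{18}) together with the nondegeneracy of the pairing between $RadT\acute{N}$ and $ltrT\acute{N}$, one obtains that $\nabla$ is a metric connection if and only if $h^{\ell}(W,\xi)=0$ for every $W\in\Gamma(T\acute{N})$ and $\xi\in\Gamma(RadT\acute{N})$. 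Thus the entire problem becomes that of expressing the obstruction $h^{\ell}(W,\xi)$ in terms of the screen-transversal derivative $\nabla^{s}_{W}\breve{P}\xi$.

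Next I would use that $\acute{N}$ is screen transversal, so $\breve{P}\xi\in\Gamma(S(T\acute{N}^{\perp}))$ and $\breve{\nabla}_{W}\breve{P}\xi$ is governed by the Weingarten formula (\ref{11}), whose $S(T\acute{N}^{\perp})$-component is precisely $\nabla^{s}_{W}\breve{P}\xi$. On the other hand, the parallelism (\ref{21a}) of the Golden structure gives $\breve{\nabla}_{W}\breve{P}\xi=\breve{P}\,\breve{\nabla}_{W}\xi$, while (\ref{9}) gives $\breve{\nabla}_{W}\xi=\nabla_{W}\xi+h^{\ell}(W,\xi)+h^{s}(W,\xi)$. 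Applying $\breve{P}$ and reading off the $S(T\acute{N}^{\perp})$-part is the key move: since $\breve{P}$ sends $RadT\acute{N}$ and $S(T\acute{N})$ into $S(T\acute{N}^{\perp})$, and (by Lemma 3.1) sends $ltrT\acute{N}$ into $S(T\acute{N}^{\perp})$ as well, the terms $\breve{P}\nabla_{W}\xi$ and $\breve{P}h^{\ell}(W,\xi)$ land entirely in $S(T\acute{N}^{\perp})$. This yields an identity of the schematic shape $\nabla^{s}_{W}\breve{P}\xi=\breve{P}\nabla_{W}\xi+\breve{P}h^{\ell}(W,\xi)+[\breve{P}h^{s}(W,\xi)]_{S(T\acute{N}^{\perp})}$, in which $\breve{P}h^{\ell}(W,\xi)$ is exactly the part living in the subbundle $\breve{P}\,ltrT\acute{N}$.

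I would then extract that component and ``undo'' $\breve{P}$ by means of the defining relation $\breve{P}^{2}=\breve{P}+I$. Because $\breve{P}$ is invertible with $\breve{P}^{-1}=\breve{P}-I$, one has $h^{\ell}(W,\xi)=0$ if and only if $\breve{P}h^{\ell}(W,\xi)=0$; moreover, re-applying $\breve{P}$ to a vector of $\breve{P}\,RadT\acute{N}$ recovers its radical part, and this is precisely the effect of the composite $K_{1}B_{1}$ (project onto $\breve{P}\,RadT\acute{N}$ via $P_{1}$, apply $\breve{P}$ through $B_{1}=\breve{P}P_{1}$, then take the $RadT\acute{N}$-component through $K_{1}$). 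Tracking these projections through the displayed identity isolates the obstruction carried by $h^{\ell}(W,\xi)$ as the quantity $K_{1}B_{1}\nabla^{s}_{W}\breve{P}\xi$, so that $h^{\ell}(W,\xi)=0$ if and only if $K_{1}B_{1}\nabla^{s}_{W}\breve{P}\xi=0$, which is the asserted characterization.

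The hard part will be the component bookkeeping of this last step. Since the Golden relation is $\breve{P}^{2}=\breve{P}+I$ rather than $\breve{P}^{2}=I$, applying $\breve{P}$ to a screen-transversal vector does not return it but scatters it across $RadT\acute{N}$, $ltrT\acute{N}$, $S(T\acute{N})$, $D_{o}$ and $S(T\acute{N}^{\perp})$, so the radical content that encodes $h^{\ell}(W,\xi)$ is entangled with the other summands. The delicate point is to show that, after the projections $K_{1}B_{1}$, the contributions of $A_{\breve{P}\xi}W$, of $D^{\ell}(W,\breve{P}\xi)$ and of the screen-transversal term $h^{s}(W,\xi)$ all drop out, using the orthogonality relations (\ref{12})--(\ref{13}) and the mutual orthogonality of the five summands of $T\breve{N}\mid_{\acute{N}}$. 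Verifying that exactly the single combination $K_{1}B_{1}\nabla^{s}_{W}\breve{P}\xi$ survives, with every cross term annihilated, is the crux of the argument.
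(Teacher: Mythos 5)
Your overall strategy coincides with the paper's: start from the parallelism (\ref{21a}) with $U=\xi$, expand both sides by the Gauss--Weingarten formulas (\ref{9}) and (\ref{11}), exploit $\breve{P}^{2}=\breve{P}+I$, and extract a component through the operators $B_{1}=\breve{P}P_{1}$ and $K_{1}$. The problem lies precisely in the step you flag as the crux. You assert that the obstruction $h^{\ell}(W,\xi)$ is what $K_{1}B_{1}$ isolates; it is not. By Lemma 3.1, $\breve{P}h^{\ell}(W,\xi)\in\Gamma(\breve{P}\,ltrT\acute{N})$, whereas $P_{1}$ is by definition the projection onto the \emph{complementary} summand $\breve{P}RadT\acute{N}$ of $S(T\acute{N}^{\perp})$; hence $K_{1}B_{1}=K_{1}\breve{P}P_{1}$ annihilates the $h^{\ell}$-contribution outright. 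If you actually equate the $\breve{P}RadT\acute{N}$-components of the identity $-A_{\breve{P}\xi}W+\nabla^{s}_{W}\breve{P}\xi+D^{\ell}(W,\breve{P}\xi)=\breve{P}\nabla_{W}\xi+\breve{P}h^{\ell}(W,\xi)+\breve{P}h^{s}(W,\xi)$, you get $P_{1}\nabla^{s}_{W}\breve{P}\xi=\breve{P}(R\nabla_{W}\xi)+P_{1}h^{s}(W,\xi)$, where $R\nabla_{W}\xi=\nabla^{\ast t}_{W}\xi$ is the $RadT\acute{N}$-part of $\nabla_{W}\xi$; since $\breve{P}$ is invertible, $K_{1}B_{1}\nabla^{s}_{W}\breve{P}\xi=R\nabla_{W}\xi+\breve{P}^{-1}P_{1}h^{s}(W,\xi)$. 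This quantity measures the radical part of $\nabla_{W}\xi$ together with a piece of $h^{s}(W,\xi)$, not $h^{\ell}(W,\xi)$, so your chain ``metric $\Leftrightarrow h^{\ell}(W,\xi)=0\Leftrightarrow K_{1}B_{1}\nabla^{s}_{W}\breve{P}\xi=0$'' breaks at the second equivalence.

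There is a second, independent gap in your opening reduction. From (\ref{19}), and because $ltrT\acute{N}$ is orthogonal to $S(T\acute{N})$ and pairs nondegenerately only with $RadT\acute{N}$, the condition $(\nabla_{W}g)=0$ amounts to $\bar{g}(h^{\ell}(W,U),\xi)=0$ for all $U\in\Gamma(S(T\acute{N}))$, i.e.\ $A^{\ast}_{\xi}\equiv 0$, i.e.\ $\nabla_{W}\xi\in\Gamma(RadT\acute{N})$ --- not $h^{\ell}(W,\xi)=0$. The latter condition is strictly weaker: $\bar{g}(h^{\ell}(W,\xi),\xi')$ is antisymmetric in $\xi,\xi'$ (cf.\ (\ref{18})), so $h^{\ell}(W,\xi)$ vanishes identically whenever $\operatorname{rank}RadT\acute{N}=1$, while the induced connection need not be metric. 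The genuine obstruction is the $S(T\acute{N})$-part of $\nabla_{W}\xi$, which after applying $\breve{P}$ lands in $\breve{P}S(T\acute{N})$ and is therefore seen by $P_{3}$ and $C_{1}$, not by $P_{1}$ and $B_{1}$. To be fair, the paper's own proof is equally silent about which criterion for metricity it invokes and simply asserts the final extraction, so you have not diverged from it in strategy; but as written your bookkeeping would not close, and it fails exactly where you predicted the difficulty would lie.
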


\begin{proof}
From (\ref{21a}), we have%
\begin{equation*}
\breve{\nabla}_{W}\breve{P}U=\breve{P}\breve{\nabla}_{W}U,
\end{equation*}%
in here, we get $U=\xi ,$ then we have%
\begin{eqnarray*}
\breve{\nabla}_{W}\breve{P}\xi  &=&\breve{P}\breve{\nabla}_{W}\xi  \\
-A_{\breve{P}\xi }W+\nabla _{W}^{s}\breve{P}\xi +D^{l}(W,\breve{P}\xi ) &=&%
\breve{P}\left( \nabla _{W}\xi +h^{l}(W,\xi \right) +h^{s}(W,\xi )).
\end{eqnarray*}%
Applying $\breve{P}$ to above equation, we find that%
\begin{equation*}
\left( 
\begin{array}{c}
-\breve{P}A_{\breve{P}\xi }W+B_{1}\nabla _{W}^{s}\breve{P}\xi +B_{2}\nabla
_{W}^{s}\breve{P}\xi  \\ 
+C_{1}\nabla _{W}^{s}\breve{P}\xi +C_{2}\nabla _{W}^{s}\breve{P}\xi +\breve{P%
}D^{l}(W,\breve{P}\xi )%
\end{array}%
\right) {\small =}\breve{P}^{2}\left( \nabla _{W}\xi +h^{l}(W,\xi \right)
+h^{s}(W,\xi )),
\end{equation*}%
in here, when we get, $K_{1}B_{1}\nabla _{W}^{s}\breve{P}\xi $ is remaining
part of $RadT\acute{N}$ of $B_{1}\nabla _{W}^{s}\breve{P}\xi $ and $%
K_{2}B_{2}\nabla _{W}^{s}\breve{P}\xi $ is remaining part of $S(T\acute{N})$
of $B_{2}\nabla _{W}^{s}\breve{P}\xi ,$ we can find 
\begin{equation*}
K_{1}B_{1}\nabla _{W}^{s}\breve{P}\xi =0.
\end{equation*}
\end{proof}

\begin{theorem}
Let $\acute{N}$ be a screen transversal anti-invariant lightlike submanifold
of golden semi-Riemannian manifold $\breve{N}$. Radical distribution is
integrable if and only if 
\begin{equation*}
\nabla _{W}^{s}\breve{P}U=\nabla _{U}^{s}\breve{P}W
\end{equation*}%
for $U,W\in \Gamma (RadT\acute{N}).$
\end{theorem}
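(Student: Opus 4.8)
The plan is to test integrability directly through the Lie bracket. Recall that $RadT\acute{N}$ is integrable if and only if $[U,W]\in\Gamma(RadT\acute{N})$ for all $U,W\in\Gamma(RadT\acute{N})$. The first step is to observe that the bracket already lands in $T\acute{N}$: since $\breve{\nabla}$ is a metric, hence torsion-free, connection and the second fundamental form $h$ in (\ref{9}) is symmetric, the identity $\breve{\nabla}_{U}W-\breve{\nabla}_{W}U=[U,W]$ collapses to $[U,W]=\nabla_{U}W-\nabla_{W}U\in\Gamma(T\acute{N})$. Thus integrability is nothing but the vanishing of the $S(T\acute{N})$-component of $[U,W]$, and everything reduces to detecting that screen part.

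Next I would transport the bracket by the golden structure. Using the parallelism (\ref{21a}) in the form $\breve{P}\,\breve{\nabla}_{U}W=\breve{\nabla}_{U}\breve{P}W$, I would write
\[
\breve{P}[U,W]=\breve{\nabla}_{U}\breve{P}W-\breve{\nabla}_{W}\breve{P}U.
\]
Because $\acute{N}$ is screen transversal anti-invariant, $\breve{P}U,\breve{P}W\in\Gamma(S(T\acute{N}^{\perp}))$, so each term may be expanded by the Weingarten-type formula (\ref{11}):
\[
\breve{\nabla}_{U}\breve{P}W=-A_{\breve{P}W}U+\nabla_{U}^{s}\breve{P}W+D^{\ell}(U,\breve{P}W),
\]
and symmetrically for $\breve{\nabla}_{W}\breve{P}U$. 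The heart of the argument is to subtract these and sort the outcome into the three complementary bundles $T\acute{N}$, $ltr(T\acute{N})$ and $S(T\acute{N}^{\perp})$. Here the anti-invariant hypothesis is decisive: since $\breve{P}T\acute{N}\subset S(T\acute{N}^{\perp})$ and $[U,W]\in\Gamma(T\acute{N})$, the left-hand side $\breve{P}[U,W]$ lies entirely in $S(T\acute{N}^{\perp})$. Matching components therefore annihilates the tangential $A$-terms and the lightlike-transversal $D^{\ell}$-terms, leaving the clean identity
\[
\breve{P}[U,W]=\nabla_{U}^{s}\breve{P}W-\nabla_{W}^{s}\breve{P}U.
\]

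Finally I would invoke invertibility of the golden structure, which is immediate from $\breve{P}^{2}=\breve{P}+I$ (so $\breve{P}^{-1}=\breve{P}-I$). From the displayed identity, the hypothesis $\nabla_{W}^{s}\breve{P}U=\nabla_{U}^{s}\breve{P}W$ forces $\breve{P}[U,W]=0$, whence $[U,W]=0\in\Gamma(RadT\acute{N})$ and the radical distribution is integrable; conversely, integrability gives $[U,W]\in\Gamma(RadT\acute{N})$, so $\breve{P}[U,W]\in\Gamma(\breve{P}RadT\acute{N})$ and the difference $\nabla_{U}^{s}\breve{P}W-\nabla_{W}^{s}\breve{P}U$ is constrained accordingly. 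I expect the main obstacle to be precisely this component bookkeeping. One must verify carefully that the $A$- and $D^{\ell}$-pieces genuinely cancel rather than merely being moved, and one must keep track of whether the surviving difference sits in $\breve{P}RadT\acute{N}$ or acquires a $\breve{P}(S(T\acute{N}))$ contribution, since it is the vanishing of the latter that truly encodes the absence of a screen part in $[U,W]$. The anti-invariant hypothesis is exactly what allows $\breve{P}$ to translate the statement ``$[U,W]$ has no screen component'' into the symmetry of $\nabla^{s}$ asserted in the theorem.
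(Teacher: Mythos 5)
Your route is genuinely different from the paper's. You decompose the full vector identity $\breve{P}[U,W]=\breve{\nabla}_{U}\breve{P}W-\breve{\nabla}_{W}\breve{P}U$ along $T\acute{N}\oplus ltr(T\acute{N})\perp S(T\acute{N}^{\perp })$, whereas the paper pairs $g([W,U],Z)$ with $Z\in \Gamma (S(T\acute{N}))$, uses $\breve{g}(X,Z)=\breve{g}(\breve{P}X,\breve{P}Z)-\breve{g}(X,\breve{P}Z)$ together with (\ref{21a}), and reduces to $g(\nabla _{W}^{s}\breve{P}U-\nabla _{U}^{s}\breve{P}W,\breve{P}Z)=0$. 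Your bookkeeping is sound: the tangential and $ltr(T\acute{N})$ components really do cancel, and you obtain the cleaner unconditional identity $\breve{P}[U,W]=\nabla _{U}^{s}\breve{P}W-\nabla _{W}^{s}\breve{P}U$ (with $A_{\breve{P}U}W=A_{\breve{P}W}U$ and the symmetry of $D^{\ell }$ as by-products). The ``if'' direction of your argument is complete.

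The gap is the converse, which you flag but do not close --- and as the theorem is stated it cannot be closed. Integrability only gives $[U,W]\in \Gamma (RadT\acute{N})$, hence $\breve{P}[U,W]\in \Gamma (\breve{P}RadT\acute{N})$, which by your identity merely constrains $\nabla _{U}^{s}\breve{P}W-\nabla _{W}^{s}\breve{P}U$ to lie in $\breve{P}RadT\acute{N}$; it does not force it to vanish. In fact your identity shows the displayed condition is equivalent to $[U,W]=0$, which is strictly stronger than integrability: already for a rank-one radical distribution, which is automatically integrable, $[fU,gU]=(fU(g)-gU(f))U$ need not vanish. What your argument actually establishes is that $RadT\acute{N}$ is integrable if and only if $\nabla _{U}^{s}\breve{P}W-\nabla _{W}^{s}\breve{P}U$ has no component in $\breve{P}(S(T\acute{N}))$. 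This weaker equivalence is also all the paper's own computation proves, since it only ever tests the difference against $\breve{P}Z$ with $Z\in \Gamma (S(T\acute{N}))$; the final jump to the stated equality is unjustified there as well. To repair the statement one must either weaken the right-hand condition to the vanishing of the $\breve{P}(S(T\acute{N}))$-component, or strengthen ``integrable'' to the vanishing of all brackets of radical sections.
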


\begin{proof}
From the definition of screen transversal anti-invariant lightlike
submanifold, the radical distribution is integrable if and only if 
\begin{equation*}
g\left( \left[ W,U\right] ,Z\right) =0
\end{equation*}%
for $U,W\in \Gamma (RadT\acute{N})$ and $Z\in \Gamma (S(T\acute{N})).$ From
here, we have 
\begin{equation*}
0=g(\breve{\nabla}_{W}\breve{P}U,\breve{P}Z)-g(\breve{\nabla}_{W}U,\breve{P}%
Z)-g(\breve{\nabla}_{U}\breve{P}W,\breve{P}Z)+g(\breve{\nabla}_{U}W,\breve{P}%
Z),
\end{equation*}%
because $\breve{P}U,\breve{P}W\in \Gamma (S(T\acute{N}^{\perp })),$ from (%
\ref{10}), we find%
\begin{equation*}
0=g(\nabla _{W}^{s}\breve{P}U-\nabla _{U}^{s}\breve{P}W,\breve{P}Z.
\end{equation*}%
Thus, proof is completed.
\end{proof}

\begin{theorem}
Let $\acute{N}$ be a screen transversal anti-invariant lightlike submanifold
of golden semi-Riemannian manifold $\breve{N}$. In this case, the screen
distribution\ is integrable if and only if 
\begin{equation*}
\nabla _{W}^{s}\breve{P}U-\nabla _{U}^{s}\breve{P}W=h^{s}(W,U)-h^{s}(U,W),
\end{equation*}%
for $W,U\in \Gamma (S(T\acute{N})).$
\end{theorem}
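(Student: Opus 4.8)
The plan is to read off the integrability of $S(T\acute{N})$ from the radical part of the bracket. For $W,U\in\Gamma (S(T\acute{N}))$ we always have $[W,U]\in\Gamma (T\acute{N})$, and since $T\acute{N}=RadT\acute{N}\perp S(T\acute{N})$, the screen distribution is integrable exactly when $[W,U]$ has no $RadT\acute{N}$-component. Because $RadT\acute{N}$ is totally degenerate and is paired nondegenerately only with $ltrT\acute{N}$, this component is detected by the lightlike transversal bundle. So the first step is to record that $S(T\acute{N})$ is integrable if and only if $\breve{g}([W,U],N)=0$ for all $N\in\Gamma (ltrT\acute{N})$.

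The heart of the proof is to rewrite $\breve{g}(\breve{\nabla}_{W}U,N)$ in purely screen transversal terms. First I would apply the golden compatibility relation (\ref{23}) in the two slots $\breve{\nabla}_{W}U$ and $N$, giving
\[
\breve{g}(\breve{\nabla}_{W}U,N)=\breve{g}(\breve{P}\breve{\nabla}_{W}U,\breve{P}N)-\breve{g}(\breve{P}\breve{\nabla}_{W}U,N).
\]
For the first term I use that $\breve{P}$ is parallel, (\ref{21a}), to turn $\breve{P}\breve{\nabla}_{W}U$ into $\breve{\nabla}_{W}\breve{P}U$; since $\breve{P}U\in\Gamma (S(T\acute{N}^{\perp}))$ by hypothesis and $\breve{P}N\in\Gamma (S(T\acute{N}^{\perp}))$ by Lemma 3.1, the Weingarten equation (\ref{11}) together with the orthogonality $tr(T\acute{N})=ltrT\acute{N}\perp S(T\acute{N}^{\perp})$ kills the shape operator and $D^{\ell}$ contributions and leaves $\breve{g}(\nabla_{W}^{s}\breve{P}U,\breve{P}N)$. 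For the second term I use the self-adjointness (\ref{21}) to move $\breve{P}$ onto $N$, and then the Gauss equation (\ref{9}); pairing against $\breve{P}N\in\Gamma (S(T\acute{N}^{\perp}))$ annihilates the $\nabla_{W}U$ and $h^{\ell}(W,U)$ parts, leaving $\breve{g}(h^{s}(W,U),\breve{P}N)$. Thus $\breve{g}(\breve{\nabla}_{W}U,N)=\breve{g}(\nabla_{W}^{s}\breve{P}U,\breve{P}N)-\breve{g}(h^{s}(W,U),\breve{P}N)$.

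Interchanging $W$ and $U$, subtracting, and using $\breve{\nabla}_{W}U-\breve{\nabla}_{U}W=[W,U]$, I obtain
\[
\breve{g}([W,U],N)=\breve{g}\big((\nabla_{W}^{s}\breve{P}U-\nabla_{U}^{s}\breve{P}W)-(h^{s}(W,U)-h^{s}(U,W)),\breve{P}N\big).
\]
As $N$ runs over $\Gamma (ltrT\acute{N})$ the image $\breve{P}N$ runs over $\breve{P}ltrT\acute{N}\subset S(T\acute{N}^{\perp})$, so the vanishing of the left-hand side for every $N$ is equivalent to $\nabla_{W}^{s}\breve{P}U-\nabla_{U}^{s}\breve{P}W=h^{s}(W,U)-h^{s}(U,W)$, which is the asserted condition, and the proof is complete.

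The step I expect to be the main obstacle is the bookkeeping in the middle paragraph: one must apply (\ref{23}), (\ref{21a}), (\ref{21}) in exactly this order and track which summand of each Gauss--Weingarten decomposition survives each pairing, so that precisely the $\nabla^{s}$ and $h^{s}$ pieces remain. A secondary subtlety is that the pairing with $\breve{P}N$ only tests the $\breve{P}RadT\acute{N}$-component of the displayed difference; promoting this to the bundle equality stated in the theorem uses that the difference $\nabla_{W}^{s}\breve{P}U-\nabla_{U}^{s}\breve{P}W-(h^{s}(W,U)-h^{s}(U,W))$ already lies in $S(T\acute{N}^{\perp})$ and that the relevant pairing is nondegenerate there.
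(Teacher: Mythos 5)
Your proof follows the paper's route exactly: reduce integrability of $S(T\acute{N})$ to the vanishing of $\breve{g}([W,U],N)$ for all $N\in \Gamma (ltrT\acute{N})$, expand $\breve{g}(\breve{\nabla}_{W}U,N)$ by means of (\ref{23}), (\ref{21a}) and (\ref{21}), and keep only the $\nabla ^{s}$ and $h^{s}$ terms of (\ref{11}) and (\ref{9}) after pairing with $\breve{P}N\in \Gamma (S(T\acute{N}^{\perp }))$. The bilinear identity you arrive at, $\breve{g}([W,U],N)=\breve{g}(\nabla _{W}^{s}\breve{P}U-\nabla _{U}^{s}\breve{P}W-h^{s}(W,U)+h^{s}(U,W),\breve{P}N)$, is precisely the paper's displayed computation, and that part is carried out correctly.

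The one place you go beyond the paper is your closing remark, and there the justification fails. As you yourself observe, the vectors $\breve{P}N$ span only the subbundle $\breve{P}ltrT\acute{N}$ of $S(T\acute{N}^{\perp })$, and by (\ref{23}) a section $V$ of $S(T\acute{N}^{\perp })$ satisfies $\breve{g}(V,\breve{P}N)=0$ for all $N$ exactly when its $\breve{P}RadT\acute{N}$-component vanishes: indeed $\breve{g}(\breve{P}\xi ,\breve{P}N)=\breve{g}(\breve{P}\xi ,N)+\breve{g}(\xi ,N)=\breve{g}(\xi ,N)\neq 0$, while $\breve{P}ltrT\acute{N}$, $\breve{P}(S(T\acute{N}))$ and $D_{o}$ all pair to zero with $\breve{P}N$. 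Nondegeneracy of $\breve{g}$ on $S(T\acute{N}^{\perp })$ does not rescue this, because you are testing against a proper subbundle, not a spanning family. What the computation actually proves is that $S(T\acute{N})$ is integrable if and only if $\nabla _{W}^{s}\breve{P}U-\nabla _{U}^{s}\breve{P}W-h^{s}(W,U)+h^{s}(U,W)$ has no component in $\breve{P}RadT\acute{N}$ --- the ``no component in \dots'' phrasing used for some of the later theorems of the paper --- and the unrestricted equality in the statement does not follow. The paper's own proof makes the identical jump without comment, so you have reproduced its argument faithfully, gap included; the difference is only that you flagged the gap and then closed it with an invalid nondegeneracy claim rather than leaving it open.
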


From the definition of a screen transversal anti-invariant lightlike
submanifold, the screen distribution is integrable if and only if 
\begin{equation*}
g\left( \left[ W,U\right] ,N\right) =0,
\end{equation*}%
$W,U\in \Gamma (S(T\acute{N}))$ and $N\in \Gamma (ltrT\acute{N}).$In here,
if we use (\ref{21a}) and (\ref{21}), we find%
\begin{eqnarray*}
0 &=&g(\breve{\nabla}_{W}\breve{P}U,\breve{P}N)-g(\breve{\nabla}_{W}U,\breve{%
P}N)-g(\breve{\nabla}_{U}\breve{P}W,\breve{P}N)+g(\breve{\nabla}_{U}W,\breve{%
P}N), \\
&=&g(\nabla _{W}^{s}\breve{P}U,\breve{P}N)-g(\nabla _{U}^{s}\breve{P}W,%
\breve{P}N)-g(h^{s}(W,U),\breve{P}N)+g(h^{s}(U,W),\breve{P}N){\small .}
\end{eqnarray*}%
From the last equation, we have%
\begin{equation*}
\nabla _{W}^{s}\breve{P}U-\nabla _{U}^{s}\breve{P}W=h^{s}(W,U)-h^{s}(U,W).
\end{equation*}%
Thus, the proof is completed.

\section{Radical Screen Transversal Lightlike Submanifolds Of Golden
Semi-Riemannian Manifolds}

\begin{theorem}
Let $\acute{N}$ be a \ radical screen transversal lightlike submanifold of
golden semi-Riemannian manifold $\breve{N}$. In this case, the screen
distribution is integrable if and only if 
\begin{equation*}
h^{s}(W,\breve{P}U)=h^{s}(U,\breve{P}W),
\end{equation*}%
$W,U\in \Gamma (S(T\acute{N})).$
\end{theorem}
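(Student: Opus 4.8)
The plan is to combine the standard integrability criterion for the screen distribution with the parallelism (\ref{21a}) and the compatibility (\ref{21}), (\ref{23}) of the golden structure, thereby converting the bracket condition into a symmetry condition on the screen-valued second fundamental form $h^{s}$. First I would record the ambient orthogonality relations coming from the decomposition $T\breve{N}\mid_{\acute{N}}=S(T\acute{N})\perp\lbrack RadT\acute{N}\oplus ltrT\acute{N}]\perp S(T\acute{N}^{\perp})$: the bundle $S(T\acute{N}^{\perp})$ is orthogonal both to $T\acute{N}$ and to $ltrT\acute{N}$, while $ltrT\acute{N}$ is totally lightlike and dual to $RadT\acute{N}$. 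Since $\acute{N}$ is radical screen transversal, $\breve{P}U\in\Gamma(S(T\acute{N}))$ whenever $U\in\Gamma(S(T\acute{N}))$, and by Lemma 3.1 together with the screen transversal hypothesis $\breve{P}N\in\Gamma(S(T\acute{N}^{\perp}))$ for $N\in\Gamma(ltrT\acute{N})$. Because $T\acute{N}=RadT\acute{N}\perp S(T\acute{N})$ and $ltrT\acute{N}$ is dual to $RadT\acute{N}$, the screen distribution is integrable if and only if $g([W,U],N)=0$ for all $W,U\in\Gamma(S(T\acute{N}))$ and $N\in\Gamma(ltrT\acute{N})$.

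The heart of the argument is the evaluation of $g(\breve{\nabla}_{W}U,N)$. Applying (\ref{23}) to the pair $\breve{\nabla}_{W}U,N$, then the parallelism (\ref{21a}) in the form $\breve{P}\breve{\nabla}_{W}U=\breve{\nabla}_{W}\breve{P}U$, together with the symmetry (\ref{21}) in the second resulting term, I would rewrite
\begin{equation*}
g(\breve{\nabla}_{W}U,N)=g(\breve{\nabla}_{W}\breve{P}U,\breve{P}N)-g(\breve{\nabla}_{W}U,\breve{P}N).
\end{equation*}
Now inserting the Gauss formula (\ref{9}) into each term and discarding everything annihilated by the orthogonality relations above --- pairing against $\breve{P}N\in\Gamma(S(T\acute{N}^{\perp}))$ kills the $\nabla$ and $h^{\ell}$ parts and leaves only the $h^{s}$ parts --- gives
\begin{equation*}
g(\breve{\nabla}_{W}U,N)=g\bigl(h^{s}(W,\breve{P}U)-h^{s}(W,U),\breve{P}N\bigr).
\end{equation*}
Writing the same identity with $W$ and $U$ interchanged, subtracting, and using the symmetry $h^{s}(W,U)=h^{s}(U,W)$ of the screen second fundamental form so that the common terms cancel, I obtain the master identity
\begin{equation*}
g([W,U],N)=g\bigl(h^{s}(W,\breve{P}U)-h^{s}(U,\breve{P}W),\breve{P}N\bigr).
\end{equation*}

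Both implications then follow from this identity. If $h^{s}(W,\breve{P}U)=h^{s}(U,\breve{P}W)$ for all $W,U\in\Gamma(S(T\acute{N}))$, the right-hand side vanishes for every $N$, so $[W,U]\in\Gamma(S(T\acute{N}))$ and the screen distribution is integrable; this direction is immediate. Conversely, integrability gives $g(h^{s}(W,\breve{P}U)-h^{s}(U,\breve{P}W),\breve{P}N)=0$ for all $N\in\Gamma(ltrT\acute{N})$, from which I would read off the stated vector equation. I expect this final read-off to be the only delicate point: the vanishing scalar only tests the difference $h^{s}(W,\breve{P}U)-h^{s}(U,\breve{P}W)\in\Gamma(S(T\acute{N}^{\perp}))$ against $\breve{P}(ltrT\acute{N})$, so deducing the full equality relies on this pairing being nondegenerate on the relevant part of $S(T\acute{N}^{\perp})$; I would take care to justify this step (or to phrase the conclusion through the pairing) rather than treat the passage from the scalar identity to the vector equation as automatic.
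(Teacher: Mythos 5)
Your proposal follows essentially the same route as the paper: reduce integrability of $S(T\acute{N})$ to $g([W,U],N)=0$, rewrite $g(\breve{\nabla}_{W}U,N)$ using (\ref{23}), (\ref{21a}) and (\ref{21}) as $g(\breve{\nabla}_{W}\breve{P}U,\breve{P}N)-g(\breve{\nabla}_{W}U,\breve{P}N)$, expand by the Gauss formula (\ref{9}) so that pairing against $\breve{P}N\in\Gamma(S(T\acute{N}^{\perp}))$ retains only the $h^{s}$-terms, antisymmetrize in $W,U$, and cancel $h^{s}(W,U)-h^{s}(U,W)$ by the symmetry of $h^{s}$. The caveat you flag at the end is genuine, and the paper glosses over it: since $\breve{g}(\breve{P}N,\breve{P}N')=\breve{g}(\breve{P}N,N')+\breve{g}(N,N')=0$, the subbundle $\breve{P}(ltrT\acute{N})$ is totally isotropic and pairs nondegenerately only with $\breve{P}RadT\acute{N}$ inside $S(T\acute{N}^{\perp})=\breve{P}RadT\acute{N}\oplus\breve{P}ltrT\acute{N}\perp D_{o}$, so the vanishing of the scalar product for all $N$ yields only that $h^{s}(W,\breve{P}U)-h^{s}(U,\breve{P}W)$ has no component in $\breve{P}RadT\acute{N}$ (compare the phrasing of the totally geodesic foliation theorem later in the same section), not the full vector equation asserted in the statement; your instinct to phrase the conclusion through the pairing, or to justify nondegeneracy on the relevant summand, is the correct repair.
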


\begin{proof}
By the definition of a radical screen transversal lightlike submanifold. The
screen distribution is integrable if and only if%
\begin{equation*}
g\left( \left[ W,U\right] ,N\right) =0,
\end{equation*}%
$W,U\in \Gamma (S(T\acute{N}))$ and $N\in \Gamma (ltrT\acute{N}).$ In here,
using (\ref{21}) and (\ref{21a}), we find%
\begin{eqnarray*}
0 &=&g(\breve{\nabla}_{W}\breve{P}U,\breve{P}N){\small -}g(\breve{\nabla}%
_{W}U,\breve{P}N){\small -}g(\breve{\nabla}_{U}\breve{P}W,\breve{P}N)+g(%
\breve{\nabla}_{U}W,\breve{P}N), \\
&=&g(h^{s}(W,\breve{P}U)-h^{s}(U,\breve{P}W)-h^{s}(W,U)+h^{s}(U,W),\breve{P}%
N),
\end{eqnarray*}%
from here, we have%
\begin{equation*}
h^{s}(W,\breve{P}U)-h^{s}(U,\breve{P}W)=h^{s}\left( W,U\right) -h^{s}\left(
U,W\right) .
\end{equation*}%
Therefore, the proof is completed.
\end{proof}

\begin{theorem}
Let $\acute{N}$ be a\ radical screen transversal lightlike submanifold of
golden semi-Riemannian manifold $\breve{N}$. The radical distribution is
integrable if and only if 
\begin{equation*}
A_{\breve{P}U}W-A_{\breve{P}W}U=A_{W}^{\ast }U-A_{U}^{\ast }W
\end{equation*}%
for $W,U\in \Gamma (RadT\acute{N}).$
\end{theorem}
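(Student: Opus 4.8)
The plan is to characterize integrability of $RadT\acute{N}$ through the screen component of the Lie bracket and then to evaluate that component in two complementary ways. Since $RadT\acute{N}=T\acute{N}\cap T\acute{N}^{\perp}$ and the bracket of two tangent fields is again tangent, for $W,U\in\Gamma(RadT\acute{N})$ one has $[W,U]\in\Gamma(T\acute{N})$ automatically; because $T\acute{N}=RadT\acute{N}\perp S(T\acute{N})$ with $S(T\acute{N})$ non-degenerate, the radical distribution is integrable if and only if the $S(T\acute{N})$-component of $[W,U]$ vanishes, i.e. $g([W,U],Z)=0$ for all $Z\in\Gamma(S(T\acute{N}))$.

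First I would compute this screen component directly from the induced structure. Writing $[W,U]=\nabla_{W}U-\nabla_{U}W$ and applying (\ref{15}) to each term (with $W$ and $U$ playing the role of the radical field), the $\nabla^{\ast t}$-parts lie in $RadT\acute{N}$ while the $A^{\ast}$-parts lie in $S(T\acute{N})$, so the screen component of $[W,U]$ is exactly $A_{W}^{\ast}U-A_{U}^{\ast}W$. Thus integrability is equivalent to $g(A_{W}^{\ast}U-A_{U}^{\ast}W,Z)=0$ for all $Z\in\Gamma(S(T\acute{N}))$.

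Next I would produce the transversal shape operators using the golden structure. Since $\acute{N}$ is radical screen transversal, $\breve{P}$ restricts to an automorphism of $S(T\acute{N})$, so it suffices to test against $\breve{P}Z$; using $\breve{P}$-compatibility (\ref{21}) together with the parallelism (\ref{21a}) gives $\breve{g}(\breve{\nabla}_{W}U,\breve{P}Z)=\breve{g}(\breve{\nabla}_{W}\breve{P}U,Z)$. Because the screen transversal condition forces $\breve{P}U,\breve{P}W\in\Gamma(S(T\acute{N}^{\perp}))$, I would expand $\breve{\nabla}_{W}\breve{P}U$ by the Weingarten formula (\ref{11}); as $\nabla_{W}^{s}\breve{P}U\in\Gamma(S(T\acute{N}^{\perp}))$ and $D^{\ell}(W,\breve{P}U)\in\Gamma(ltrT\acute{N})$ are both orthogonal to $Z\in\Gamma(S(T\acute{N}))$, only the shape term survives, so $\breve{g}(\breve{\nabla}_{W}\breve{P}U,Z)=-g(A_{\breve{P}U}W,Z)$. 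Subtracting the expression symmetric in $U,W$ then re-expresses the screen component of the bracket in terms of $A_{\breve{P}U}W-A_{\breve{P}W}U$.

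Finally I would equate the two evaluations of $g([W,U],\cdot)$ and invoke non-degeneracy of $S(T\acute{N})$ together with the invertibility of $\breve{P}$ on $S(T\acute{N})$ to drop the test vector, arriving at $A_{\breve{P}U}W-A_{\breve{P}W}U=A_{W}^{\ast}U-A_{U}^{\ast}W$ precisely when $RadT\acute{N}$ is integrable. The step I expect to be the main obstacle is the bookkeeping of components: one must verify that every transversal piece ($\nabla^{s}$, $D^{\ell}$, and the $\nabla^{\ast t}$-parts) is genuinely orthogonal to the chosen screen test vector, and one must track the action of $\breve{P}$ carefully so that the extra $\breve{P}$-factor and the signs coming from (\ref{21})--(\ref{23}) are fully absorbed and the identity emerges in the stated form rather than a $\breve{P}$-twisted one.
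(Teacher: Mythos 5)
Your proposal is correct and follows essentially the same route as the paper: reduce integrability of $RadT\acute{N}$ to the vanishing of $g([W,U],Z)$ for $Z\in \Gamma (S(T\acute{N}))$, move $\breve{P}$ through the connection via (\ref{21a}) and (\ref{21}), expand $\breve{\nabla}_{W}\breve{P}U$ with (\ref{11}) (legitimate since $\breve{P}U\in \Gamma (S(T\acute{N}^{\perp }))$) and $\breve{\nabla}_{W}U$ with (\ref{9}) and (\ref{15}), and discard every transversal piece against the screen test vector. The $\breve{P}$-twist you flag as the main remaining obstacle is exactly what the paper's bookkeeping sidesteps: using (\ref{23}) it writes $g([W,U],Z)=g(\breve{P}[W,U],\breve{P}Z)-g([W,U],\breve{P}Z)$, so after expansion every surviving term is paired with the \emph{same} test vector $\breve{P}Z$ (which sweeps out all of $S(T\acute{N})$ because $\breve{P}$ restricts to an automorphism of the screen), and the condition drops out untwisted as $0=g\bigl(A_{\breve{P}U}W-A_{\breve{P}W}U-A_{W}^{\ast }U+A_{U}^{\ast }W,\breve{P}Z\bigr)$, whereas your split pairing (shape-operator terms against $Z$, the $A^{\ast }$ terms against $\breve{P}Z$) would force you to invoke the invertibility of $\breve{P}+I=\breve{P}^{2}$ on $S(T\acute{N})$ to remove the residual $\breve{P}$-factor.
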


\begin{proof}
From the radical screen transversal lightlike submanifold, the radical
distribution is integrable if and only if 
\begin{equation*}
g\left( \left[ W,U\right] ,Z\right) =0,
\end{equation*}%
for $W,U\in \Gamma (RadT\acute{N})$ and $Z\in \Gamma (S(T\acute{N})).$ Using
(\ref{21}) and (\ref{21a}), we have%
\begin{equation*}
0=g(\breve{\nabla}_{W}\breve{P}U,\breve{P}Z)-g(\breve{\nabla}_{U}\breve{P}W,%
\breve{P}Z)-g(\breve{\nabla}_{W}U,\breve{P}Z)+g(\breve{\nabla}_{U}W,\breve{P}%
Z).
\end{equation*}%
Since $\breve{P}U,\breve{P}W\in \Gamma (S(T\acute{N}^{\perp }))$ and $Z\in
\Gamma (S(T\acute{N}))$, if we use equation (\ref{9}) and (\ref{11}), we
obtain%
\begin{equation*}
0=g\left( A_{\breve{P}U}W-A_{\breve{P}W}U-A_{W}^{\ast }U+A_{U}^{\ast }W,%
\breve{P}Z\right) .
\end{equation*}%
Hence the proof is completed.
\end{proof}

\begin{proposition}
Let $\acute{N}$ be a radical screen transversal lightlike submanifold of
golden semi-Riemannian manifold $\breve{N}$. Then, the distribution $D_{o}$
is invariant with respect to $\breve{P}.$
\end{proposition}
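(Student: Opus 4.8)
The plan is to follow the same component-by-component strategy used in the proof of Proposition 3.1, but to replace the anti-invariance computation for the screen part by the radical-screen-transversal identity $\breve{P}(S(T\acute{N}))=S(T\acute{N})$. First I would fix the relevant decomposition: in the radical screen transversal case, since $\breve{P}(S(T\acute{N}))=S(T\acute{N})$ no longer sits inside $S(T\acute{N}^{\perp})$, the distribution $D_{o}$ is the non-degenerate orthogonal complement
\[
S(T\acute{N}^{\perp})=\breve{P}\,RadT\acute{N}\oplus\breve{P}\,ltrT\acute{N}\perp D_{o},
\]
where $\breve{P}\,RadT\acute{N}\subset S(T\acute{N}^{\perp})$ by the screen transversal hypothesis and $\breve{P}\,ltrT\acute{N}\subset S(T\acute{N}^{\perp})$ by Lemma 3.1. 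Thus for $U\in\Gamma(D_{o})$ I have, from this decomposition together with $T\breve{N}|_{\acute{N}}=S(T\acute{N})\perp[RadT\acute{N}\oplus ltrT\acute{N}]\perp S(T\acute{N}^{\perp})$, that $U$ is orthogonal to $S(T\acute{N})$, to $RadT\acute{N}$, to $ltrT\acute{N}$, and to both $\breve{P}\,RadT\acute{N}$ and $\breve{P}\,ltrT\acute{N}$.

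Next I would show that $\breve{P}U$ has no component in $S(T\acute{N})$, $RadT\acute{N}$, or $ltrT\acute{N}$, which by the ambient decomposition forces $\breve{P}U\in\Gamma(S(T\acute{N}^{\perp}))$. For the $RadT\acute{N}$-component I pair against $ltrT\acute{N}$ and use the compatibility (\ref{21}): for $N\in\Gamma(ltrT\acute{N})$, $\breve{g}(\breve{P}U,N)=\breve{g}(U,\breve{P}N)=0$, since $\breve{P}N\in\breve{P}\,ltrT\acute{N}$ and $U\perp\breve{P}\,ltrT\acute{N}$. Symmetrically, for $\xi\in\Gamma(RadT\acute{N})$, $\breve{g}(\breve{P}U,\xi)=\breve{g}(U,\breve{P}\xi)=0$ because $\breve{P}\xi\in\breve{P}\,RadT\acute{N}$ and $U\perp\breve{P}\,RadT\acute{N}$, killing the $ltrT\acute{N}$-component. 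The only genuinely different step from Proposition 3.1 is the $S(T\acute{N})$-component: for $Z\in\Gamma(S(T\acute{N}))$, $\breve{g}(\breve{P}U,Z)=\breve{g}(U,\breve{P}Z)$, and here $\breve{P}Z\in S(T\acute{N})$ by the radical-screen-transversal hypothesis, so orthogonality of $U\in S(T\acute{N}^{\perp})$ to $S(T\acute{N})$ gives $\breve{g}(\breve{P}U,Z)=0$.

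Finally, to place $\breve{P}U$ inside $D_{o}$ rather than merely in $S(T\acute{N}^{\perp})$, I would check orthogonality of $\breve{P}U$ to $\breve{P}\,RadT\acute{N}$ and $\breve{P}\,ltrT\acute{N}$ using the golden identity (\ref{23}). For $\xi\in\Gamma(RadT\acute{N})$,
\[
\breve{g}(\breve{P}U,\breve{P}\xi)=\breve{g}(\breve{P}U,\xi)+\breve{g}(U,\xi)=0,
\]
since both terms on the right vanish by the previous paragraph and by $U\perp RadT\acute{N}$; the same computation with $N$ in place of $\xi$ yields $\breve{g}(\breve{P}U,\breve{P}N)=0$. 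Combined with the three component computations, this shows $\breve{P}U\in\Gamma(D_{o})$, so $D_{o}$ is invariant under $\breve{P}$. I do not expect any serious obstacle here: the argument is routine bookkeeping of orthogonality relations, and the only point requiring care is to use the correct decomposition of $S(T\acute{N}^{\perp})$ for the radical case and to invoke $\breve{P}(S(T\acute{N}))=S(T\acute{N})$ exactly where Proposition 3.1 would instead use anti-invariance.
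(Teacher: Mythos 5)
Your proposal is correct and follows essentially the same route as the paper: the same decomposition $S(T\acute{N}^{\perp })=\breve{P}RadT\acute{N}\oplus \breve{P}ltrT\acute{N}\perp D_{o}$, the same use of $\breve{P}(S(T\acute{N}))=S(T\acute{N})$, and the same six orthogonality pairings (against $\xi$, $N$, $Z$, $\breve{P}\xi$, $\breve{P}N$) via the compatibility (\ref{21}) and the golden identity (\ref{23}). You simply spell out the justifications that the paper leaves implicit.
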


\begin{proof}
In a radical screen transversal lightlike submanifold, we know that 
\begin{equation*}
S(T\acute{N}^{\perp })=\breve{P}RadT\acute{N}\oplus \breve{P}ltrT\acute{N}%
\perp D_{o},
\end{equation*}%
and we have $\breve{P}S(T\acute{N})=S(T\acute{N})$. From here, for $W\in
D_{o}$ and $U\in \Gamma (S(T\acute{N})),$ we find that%
\begin{eqnarray*}
g(\breve{P}W,\xi ) &=&g(W,\breve{P}\xi )=0, \\
g(\breve{P}W,\breve{P}\xi ) &=&0, \\
g(\breve{P}W,N) &=&g(W,\breve{P}N)=0, \\
g(\breve{P}W,\breve{P}N) &=&0, \\
g(\breve{P}W,U) &=&g(W,\breve{P}U)=0, \\
g(\breve{P}W,\breve{P}U) &=&0.
\end{eqnarray*}%
Therefore, we obtain that the distribution $D_{o}$ is invariant with respect
to $\breve{P}.$
\end{proof}

\begin{theorem}
Let $\acute{N}$ be a\ radical screen transversal lightlike submanifold of
golden semi-Riemannian manifold $\breve{N}$. The screen distribution define
totally geodesic foliation if and only if there is no component of $h^{s}(W,%
\breve{P}U)-h^{s}(W,U)$ in $\breve{P}ltrT\acute{N}$ for $W,U\in \Gamma (S(T%
\acute{N})).$
\end{theorem}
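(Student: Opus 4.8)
The plan is to reduce the foliation statement to a single scalar equation and then translate that equation through the Golden structure. The screen distribution $S(T\acute{N})$ defines a totally geodesic foliation exactly when every leaf is totally geodesic in $\acute{N}$, i.e. when $\nabla _{W}U\in \Gamma (S(T\acute{N}))$ for all $W,U\in \Gamma (S(T\acute{N}))$. Since $T\acute{N}=RadT\acute{N}\perp S(T\acute{N})$ and, by the duality $\breve{g}(\xi _{i},N_{j})=\delta _{ij}$, the pairing $\breve{g}(\cdot ,N)$ with $N\in \Gamma (ltrT\acute{N})$ reads off precisely the $RadT\acute{N}$-component, this is equivalent to $\breve{g}(\breve{\nabla}_{W}U,N)=0$ for all such $W,U$ and all $N\in \Gamma (ltrT\acute{N})$. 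Expanding $\breve{\nabla}_{W}U$ by (\ref{9}) and discarding the $h^{\ell }$ and $h^{s}$ terms (each orthogonal to $ltrT\acute{N}$), and then using the Weingarten formula (\ref{10}) together with metricity of $\breve{\nabla}$, I rewrite this scalar as $\breve{g}(\breve{\nabla}_{W}U,N)=g(A_{N}W,U)=-\breve{g}(\breve{\nabla}_{W}N,U)$.

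The core of the argument is to express $g(A_{N}W,U)$ through $h^{s}$. I would expand $-\breve{g}(\breve{\nabla}_{W}N,U)$ using the Golden relation (\ref{23}) in the form $\breve{g}(X,Y)=\breve{g}(\breve{P}X,\breve{P}Y)-\breve{g}(\breve{P}X,Y)$ with $X=\breve{\nabla}_{W}N$ and $Y=U$, and then apply (\ref{21a}) to replace $\breve{P}\breve{\nabla}_{W}N$ by $\breve{\nabla}_{W}\breve{P}N$. Here $\breve{P}N\in \Gamma (S(T\acute{N}^{\perp }))$ by Lemma 3.1, while $\breve{P}U\in \Gamma (S(T\acute{N}))$ by the radical screen transversal hypothesis, so the two resulting terms become the shape operator $A_{\breve{P}N}$ paired against $\breve{P}U$ and against $U$. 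Each of these I replace using (\ref{12}), which here collapses to $g(A_{\breve{P}N}W,V)=\breve{g}(h^{s}(W,V),\breve{P}N)$ because the $D^{\ell }$-contribution vanishes ($ltrT\acute{N}\perp S(T\acute{N})$). Collecting the two contributions yields the key identity
\begin{equation*}
\breve{g}(\breve{\nabla}_{W}U,N)=\breve{g}\big(h^{s}(W,\breve{P}U)-h^{s}(W,U),\,\breve{P}N\big).
\end{equation*}

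With this identity the theorem follows by reading off when the right-hand side vanishes for every $N\in \Gamma (ltrT\acute{N})$, namely exactly when $h^{s}(W,\breve{P}U)-h^{s}(W,U)$ has no component in $\breve{P}ltrT\acute{N}$. The step I expect to be the main obstacle is the bookkeeping inside $S(T\acute{N}^{\perp })=\breve{P}RadT\acute{N}\oplus \breve{P}ltrT\acute{N}\perp D_{o}$. Using (\ref{23}), Lemma 3.1 and the definition of a screen transversal submanifold, one checks that $\breve{P}RadT\acute{N}$ and $\breve{P}ltrT\acute{N}$ are each totally null and that $\breve{g}(\breve{P}\xi ,\breve{P}N)=\breve{g}(\xi ,N)$, so these two summands pair nondegenerately only with one another while $D_{o}$ is orthogonal to both. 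Because a totally null summand is its own orthogonal complement within such a dual pair, one must argue carefully which summand the functional $\breve{g}(\cdot ,\breve{P}N)$ actually isolates, and confirm that $D_{o}$ contributes nothing, before the scalar criterion can be identified with the stated condition on the $\breve{P}ltrT\acute{N}$-component; this is further complicated by the overloaded use of $\breve{P}$ for both the Golden structure and the screen projection in (\ref{14})--(\ref{18}).
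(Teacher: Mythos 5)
Your derivation is correct and lands on exactly the same key identity as the paper, namely $\breve{g}(\breve{\nabla}_{W}U,N)=\breve{g}\bigl(h^{s}(W,\breve{P}U)-h^{s}(W,U),\breve{P}N\bigr)$; the only difference is the route. The paper applies $\breve{g}(X,N)=\breve{g}(\breve{P}X,\breve{P}N)-\breve{g}(X,\breve{P}N)$ directly to $X=\breve{\nabla}_{W}U$ and then reads off the $h^{s}$ terms from the Gauss formula (\ref{9}), using that $\breve{P}U\in\Gamma(S(T\acute{N}))$ and $\breve{P}N\in\Gamma(S(T\acute{N}^{\perp}))$; you instead first flip the derivative onto $N$ by metricity and go through the Weingarten formula (\ref{11}) and relation (\ref{12}). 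Your detour is harmless (the $D^{\ell}$ terms do vanish as you say), just longer. The one point worth pressing is the bookkeeping you flag at the end, and your worry is well founded: since $\breve{g}(\breve{P}N,\breve{P}N')=\breve{g}(\breve{P}N,N')+\breve{g}(N,N')=0$ while $\breve{g}(\breve{P}\xi,\breve{P}N)=\breve{g}(\xi,N)$, the bundle $\breve{P}ltrT\acute{N}$ is totally null and pairs nondegenerately only with $\breve{P}RadT\acute{N}$, and $D_{o}$ is orthogonal to both. Hence the functional $\breve{g}(\cdot,\breve{P}N)$ annihilates the $\breve{P}ltrT\acute{N}$- and $D_{o}$-parts and isolates the $\breve{P}RadT\acute{N}$-component, so in the direct-sum sense the criterion is that $h^{s}(W,\breve{P}U)-h^{s}(W,U)$ has no component in $\breve{P}RadT\acute{N}$; the paper's phrasing ``in $\breve{P}ltrT\acute{N}$'' is at best the loose dual convention (vanishing of the pairing against $\breve{P}ltrT\acute{N}$) and at worst names the wrong summand. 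The paper does not address this; you identified the right issue, and resolving it as above completes your argument.
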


\begin{proof}
By definition of radical screen transversal lightlike submanifold, $S(T%
\acute{N})$ define totally geodesic foliation if and only if 
\begin{equation*}
\breve{g}\left( \nabla _{W}U,N\right) =0,
\end{equation*}%
where $W,U\in \Gamma (S(T\acute{N}))$ and $N\in \Gamma (ltrT\acute{N}).$
From here, if we use (\ref{21}) and (\ref{21a}), we can find%
\begin{equation*}
0=\breve{g}(\breve{\nabla}_{W}\breve{P}U,\breve{P}N)-\breve{g}(\breve{\nabla}%
_{W}U,\breve{P}N),
\end{equation*}%
from here, we have 
\begin{equation*}
0=g(h^{s}(W,\breve{P}U)-h^{s}(W,U),\breve{P}N).
\end{equation*}%
Hence, we obtain that, there is no component of $h^{s}(W,\breve{P}%
U)-h^{s}(W,U)$ in$\ \breve{P}ltrT\acute{N}$ .
\end{proof}

\begin{theorem}
Let $\acute{N}$ be a\ radical screen transversal lightlike submanifold of
golden semi-Riemannian manifold $\breve{N}$. The radical distribution define
totally geodesic foliation if and only if there is no component of $A_{%
\breve{P}U}W$\ in $S(T\acute{N})$ and $A_{U}^{\ast }W=0$, for $W,U\in \Gamma
(RadT\acute{N}).$
\end{theorem}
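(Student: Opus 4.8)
The plan is to reduce the totally geodesic foliation condition to the vanishing of the $S(T\acute{N})$-component of $\nabla _{W}U$ and then read off that component in two complementary ways. Since $T\acute{N}=RadT\acute{N}\perp S(T\acute{N})$ with $g$ non-degenerate on $S(T\acute{N})$ and $RadT\acute{N}\perp S(T\acute{N})$, the radical distribution defines a totally geodesic foliation precisely when $\nabla _{W}U\in \Gamma (RadT\acute{N})$, equivalently $g(\nabla _{W}U,Z)=0$ for all $W,U\in \Gamma (RadT\acute{N})$ and $Z\in \Gamma (S(T\acute{N}))$. First I would record that $g(\nabla _{W}U,Z)=\breve{g}(\breve{\nabla}_{W}U,Z)$: by (\ref{9}) the two correction terms $h^{\ell }(W,U)\in \Gamma (ltrT\acute{N})$ and $h^{s}(W,U)\in \Gamma (S(T\acute{N}^{\perp }))$ are both orthogonal to $S(T\acute{N})$ in the decomposition $T\breve{N}\mid _{\acute{N}}=S(T\acute{N})\perp \lbrack RadT\acute{N}\oplus ltrT\acute{N}]\perp S(T\acute{N}^{\perp })$, so they drop out.

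For the $A_{\breve{P}U}$ half, I would exploit that $\breve{P}(S(T\acute{N}))=S(T\acute{N})$ is an isomorphism, so testing against $Z$ is the same as testing against $\breve{P}Z$. Using the self-adjointness (\ref{21}) and the parallelism (\ref{21a}),
\[
\breve{g}(\breve{\nabla}_{W}U,\breve{P}Z)=\breve{g}(\breve{P}\breve{\nabla}_{W}U,Z)=\breve{g}(\breve{\nabla}_{W}\breve{P}U,Z).
\]
Because $U\in \Gamma (RadT\acute{N})$ gives $\breve{P}U\in \Gamma (\breve{P}RadT\acute{N})\subset \Gamma (S(T\acute{N}^{\perp }))$, the Weingarten formula (\ref{11}) yields $\breve{\nabla}_{W}\breve{P}U=-A_{\breve{P}U}W+\nabla _{W}^{s}\breve{P}U+D^{\ell }(W,\breve{P}U)$, and pairing with $Z\in \Gamma (S(T\acute{N}))$ annihilates $\nabla _{W}^{s}\breve{P}U\in \Gamma (S(T\acute{N}^{\perp }))$ and $D^{\ell }(W,\breve{P}U)\in \Gamma (ltrT\acute{N})$. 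Hence $\breve{g}(\breve{\nabla}_{W}U,\breve{P}Z)=-g(A_{\breve{P}U}W,Z)$, and as $\breve{P}Z$ ranges over all of $S(T\acute{N})$ the foliation is totally geodesic iff $A_{\breve{P}U}W$ has no $S(T\acute{N})$-component.

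For the $A_{U}^{\ast }$ half, I would instead expand $\nabla _{W}U$ directly through the radical analogue (\ref{15}): $\nabla _{W}U=-A_{U}^{\ast }W+\nabla _{W}^{\ast t}U$, where $A_{U}^{\ast }W\in \Gamma (S(T\acute{N}))$ is exactly the screen component and $\nabla _{W}^{\ast t}U\in \Gamma (RadT\acute{N})$. Pairing with $Z\in \Gamma (S(T\acute{N}))$ gives $g(\nabla _{W}U,Z)=-g(A_{U}^{\ast }W,Z)$, so the screen component vanishes iff $A_{U}^{\ast }W=0$. Combining the two descriptions, $\nabla _{W}U\in \Gamma (RadT\acute{N})$ for all such $W,U$ exactly when $A_{\breve{P}U}W$ has no component in $S(T\acute{N})$ and $A_{U}^{\ast }W=0$, which is the assertion.

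The part that needs the most care is bookkeeping rather than estimation. I must keep the golden tensor $\breve{P}$ distinct from the screen projection also written $\breve{P}$ in (\ref{14})--(\ref{18}), and I must use the full four-term orthogonal splitting of $T\breve{N}\mid _{\acute{N}}$ at each step so that precisely the claimed terms survive each pairing; checking that $\breve{P}$ restricts to an automorphism of $S(T\acute{N})$, so that ``test against $\breve{P}Z$'' equals ``test against $Z$'', is what legitimizes the first half. I would also remark that, since each of the two conditions is on its own equivalent to the total geodesy of the foliation, they are in fact equivalent to one another, so the stated conjunction is consistent.
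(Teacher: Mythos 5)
Your argument is correct, and it is built from the same ingredients as the paper's proof --- the compatibility (\ref{21}), the parallelism (\ref{21a}), the Weingarten formula (\ref{11}) applied to $\breve{P}U\in \Gamma (S(T\acute{N}^{\perp }))$, and the screen decomposition (\ref{15}) --- but it organizes them differently, and the difference matters. The paper feeds everything into the single identity (\ref{23}), obtaining
\begin{equation*}
\breve{g}(\breve{\nabla}_{W}U,Z)=g\left( -A_{\breve{P}U}W+A_{U}^{\ast }W,\breve{P}Z\right) ,
\end{equation*}
so what it literally establishes is that the foliation is totally geodesic if and only if the \emph{difference} $A_{\breve{P}U}W-A_{U}^{\ast }W$ has no component in $S(T\acute{N})$; passing from that to the stated conjunction (no screen component of $A_{\breve{P}U}W$ \emph{and} $A_{U}^{\ast }W=0$) requires an extra observation that the paper does not supply. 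You instead compute $g(\nabla _{W}U,\cdot )$ on $S(T\acute{N})$ twice --- once against $\breve{P}Z$, transported by (\ref{21}) and (\ref{21a}) to $-g(A_{\breve{P}U}W,Z)$, and once directly against $Z$ via (\ref{15}) to $-g(A_{U}^{\ast }W,Z)$ --- and use that $\breve{P}$ restricts to an automorphism of $S(T\acute{N})$ (by the definition of radical screen transversal, or because $\breve{P}^{-1}=\breve{P}-I$) so that testing against $\breve{P}Z$ exhausts $S(T\acute{N})$. Each condition is then \emph{separately} equivalent to total geodesy, which yields the conjunction and, as you note, shows the two conditions are equivalent to one another. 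Your route is therefore logically tighter and actually closes a small gap in the published argument; the paper's route is shorter on the page but proves, as written, only the weaker condition on the difference. The only caveats are cosmetic: be explicit that $A_{U}^{\ast }W\in \Gamma (S(T\acute{N}))$ with $g$ non-degenerate there (so that vanishing of all pairings forces $A_{U}^{\ast }W=0$), and, as you already flagged, do not confuse the golden tensor $\breve{P}$ with the screen projection denoted by the same symbol in (\ref{14})--(\ref{18}).
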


\begin{proof}
The radical distribution define totally geodesic foliation if and only if 
\begin{equation*}
\breve{g}\left( \nabla _{W}U,Z\right) =0,
\end{equation*}%
for $W,U\in \Gamma (RadT\acute{N})$ and $Z\in S(T\acute{N}),$ from here, we
have%
\begin{eqnarray*}
\breve{g}(\breve{\nabla}_{W}U,Z) &=&0\Leftrightarrow g(\breve{\nabla}_{W}%
\breve{P}U,\breve{P}Z)-g(\breve{\nabla}_{W}U,\breve{P}Z)=0 \\
&=&g(-A_{\breve{P}U}W+A_{U}^{\ast }W,\breve{P}Z).
\end{eqnarray*}%
Therefore, the proof is completed.
\end{proof}

\begin{theorem}
Let $\acute{N}$ be a\ radical screen transversal lightlike submanifold of
golden semi-Riemannian manifold $\breve{N}$. The induced connection on $%
\acute{N}$ is a metric connection if and only if there is no component of $%
h^{s}(U,W)$ in $\breve{P}RadT\acute{N}$ or of $A_{\breve{P}\xi }W$ in $S(T%
\acute{N})$ for $W,U\in S(T\acute{N}).$
\end{theorem}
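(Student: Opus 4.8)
The plan is to reduce the metric condition to the vanishing of the lightlike second fundamental form and then transport that vanishing through the golden structure. By (\ref{19}) one has $(\nabla_{W}g)(U,V)=\bar{g}(h^{\ell}(W,U),V)+\bar{g}(h^{\ell}(W,V),U)$; since $h^{\ell}$ is valued in $ltrT\acute{N}$, which is paired nondegenerately only with $RadT\acute{N}$, the induced connection $\nabla$ is a metric connection exactly when $\bar{g}(h^{\ell}(W,U),\xi)=0$ for all $W,U\in\Gamma(T\acute{N})$ and $\xi\in\Gamma(RadT\acute{N})$. Expanding $\breve{\nabla}_{W}U$ by the Gauss formula (\ref{9}) and discarding the terms orthogonal to $\xi$ (namely $\nabla_{W}U$ and $h^{s}(W,U)$), I would first record the scalar reduction $g(h^{\ell}(W,U),\xi)=\bar{g}(\breve{\nabla}_{W}U,\xi)$, which is the quantity I must force to vanish when $W,U\in\Gamma(S(T\acute{N}))$.

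The main computation is to rewrite $\bar{g}(\breve{\nabla}_{W}U,\xi)$ with $\breve{P}$ moved off $U$ and onto $\xi$. Using the compatibility identity (\ref{23}) in the form $\bar{g}(X,Y)=\bar{g}(\breve{P}X,\breve{P}Y)-\bar{g}(\breve{P}X,Y)$ and then the parallelism (\ref{21a}), $\breve{P}\breve{\nabla}_{W}U=\breve{\nabla}_{W}\breve{P}U$ with $\breve{P}U\in\Gamma(S(T\acute{N}))$ (legitimate because $\acute{N}$ is radical screen transversal, so $\breve{P}(S(T\acute{N}))=S(T\acute{N})$), I expand $\breve{\nabla}_{W}\breve{P}U$ by (\ref{9}) again. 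Pairing against $\breve{P}\xi\in\Gamma(S(T\acute{N}^{\perp}))$ annihilates the tangential and lightlike transversal parts, while pairing against $\xi\in\Gamma(RadT\acute{N})$ annihilates the tangential and screen transversal parts, so only an $h^{s}$ term and an $h^{\ell}$ term survive. Carrying the same manipulation through the Weingarten formula (\ref{11}) applied to $\breve{\nabla}_{W}\breve{P}\xi$, together with $\breve{P}^{-1}=\breve{P}-I$ on the radical, I expect to reduce $g(h^{\ell}(W,U),\xi)$ to the pairing of $h^{s}(U,W)$ (and its $\breve{P}$-image) against $\breve{P}\xi$. This is the first alternative in the statement: the metric condition becomes the vanishing of the $\breve{P}RadT\acute{N}$-part of $h^{s}(U,W)$.

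To obtain the second, equivalent alternative I would feed $Z=\breve{P}\xi$ into the structure relation (\ref{12}). Since $D^{\ell}(W,\breve{P}\xi)$ lies in $ltrT\acute{N}$ and is therefore orthogonal to $U\in\Gamma(S(T\acute{N}))$, the middle term drops out and (\ref{12}) collapses to $\bar{g}(h^{s}(W,U),\breve{P}\xi)=g(A_{\breve{P}\xi}W,U)$. The left-hand side is the pairing of $h^{s}(U,W)$ against $\breve{P}\xi\in\breve{P}RadT\acute{N}$, while the right-hand side vanishes for every $U\in\Gamma(S(T\acute{N}))$ if and only if $A_{\breve{P}\xi}W$ has no $S(T\acute{N})$-component, because $S(T\acute{N})$ is nondegenerate. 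Thus the two displayed conditions are the two sides of the single relation (\ref{12}), and either one is equivalent to the vanishing required for $\nabla$ to be a metric connection.

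The step I expect to be the real obstacle is the component bookkeeping inside $S(T\acute{N}^{\perp})$. Under the hypothesis this bundle decomposes as $\breve{P}RadT\acute{N}\oplus\breve{P}ltrT\acute{N}\perp D_{o}$, and both $\breve{P}RadT\acute{N}$ and $\breve{P}ltrT\acute{N}$ are totally null and dually paired, with $\bar{g}(\breve{P}N,\breve{P}\xi)=1$ following from (\ref{23}) and $\breve{P}N\in\Gamma(S(T\acute{N}^{\perp}))$; consequently a ``component in $\breve{P}RadT\acute{N}$'' is detected only through the null pairing, not through self-orthogonality, and the projections $P_{1},\dots,P_{4}$ introduced before the theorem must be tracked carefully. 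A secondary difficulty is that $\breve{P}$ is not an involution, $\breve{P}^{2}=\breve{P}+I$ rather than $\breve{P}^{2}=I$, so the golden manipulations produce $\breve{P}$-twisted combinations of $h^{s}$ that do not close as cleanly as their almost-product analogues; I would keep the auxiliary screen terms $\nabla_{W}^{s}\breve{P}\xi$ and $D^{\ell}(W,\breve{P}\xi)$ explicit and eliminate them, and reduce the twisted combination to the single displayed term, only at the end using the automorphism $\breve{P}|_{S(T\acute{N})}$ and the relation (\ref{12}).
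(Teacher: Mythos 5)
Your proposal is correct in substance and arrives at the same pair of equivalent conditions, but it travels a genuinely different road from the paper. The paper differentiates the \emph{radical} direction: it starts from $\breve{\nabla}_{W}\breve{P}\xi=\breve{P}\breve{\nabla}_{W}\xi$ (equation (\ref{21a})), pairs with $U\in\Gamma(S(T\acute{N}))$, and after one application of (\ref{21}) together with (\ref{9}) and (\ref{11}) obtains in a single line $-g(A_{\breve{P}\xi}W,U)=g(\nabla_{W}\xi,\breve{P}U)$; metricity enters as ``$\nabla_{W}\xi$ stays in $RadT\acute{N}$,'' i.e.\ $g(\nabla_{W}\xi,\breve{P}U)=0$ for all $U$ (legitimate since $\breve{P}$ is an automorphism of $S(T\acute{N})$), and (\ref{12}) then converts this into the $h^{s}$ condition exactly as in your third paragraph. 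You instead differentiate the \emph{screen} direction: you reduce metricity to $h^{\ell}=0$ via (\ref{19}) and push $\breve{P}$ through $\breve{\nabla}_{W}U$ using (\ref{23}). That route works, but is strictly more laborious: the identity it produces is $\bar{g}(h^{\ell}(W,U),\xi)=\bar{g}(h^{s}(W,\breve{P}U),\breve{P}\xi)-\bar{g}(h^{\ell}(W,\breve{P}U),\xi)$, which mixes $U$ and $\breve{P}U$; to decouple it you must substitute $U\mapsto\breve{P}U$, invoke $\breve{P}^{2}=\breve{P}+I$, and solve the resulting two-term recursion --- precisely the ``twisted combination'' you flag as the obstacle --- with the quantifier over all $U$ and the surjectivity of $\breve{P}|_{S(T\acute{N})}$ closing the argument. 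What your version buys is an explicit statement of the metricity criterion (the paper leaves its criterion implicit) and a correct observation the paper glosses over: since $\breve{P}RadT\acute{N}$ is totally null and dually paired with $\breve{P}ltrT\acute{N}$, pairing $h^{s}$ against $\breve{P}\xi$ really detects the $\breve{P}ltrT\acute{N}$-component, so ``no component in $\breve{P}RadT\acute{N}$'' must be read through that null pairing. One caveat, inherited from the statement rather than introduced by you: restricting to $W,U\in\Gamma(S(T\acute{N}))$ gives only part of the full metricity condition, which requires $h^{\ell}(W,U)=0$ for all tangent $W,U$.
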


\begin{proof}
For $W,U\in S(T\acute{N})$ and $\xi \in \Gamma (RadT\acute{N}),$we have%
\begin{equation*}
\breve{\nabla}_{W}\breve{P}\xi =\breve{P}\breve{\nabla}_{W}\xi .
\end{equation*}%
Taking inner product of this equation with $U\in \Gamma (S(T\acute{N})),$ we
obtain 
\begin{equation*}
\breve{g}(\breve{\nabla}_{W}\breve{P}\xi ,U)=\breve{g}(\breve{\nabla}_{W}\xi
,\breve{P}U),
\end{equation*}%
from here, if we use equation (\ref{11}), we can find that%
\begin{equation*}
-g(A_{\breve{P}\xi }W,U)=g(\nabla _{W}\xi ,\breve{P}U).
\end{equation*}%
Here, we come to the conclusion that, there is no component of $A_{\breve{P}%
\xi }W$\ in $S(T\acute{N})$ or using equation (\ref{12}) in last equation,
we have 
\begin{equation*}
-g\left( h^{s}(U,W),U\right) =g(\nabla _{W}\xi ,\breve{P}U).
\end{equation*}%
Namely, there is no component of $h^{s}(U,W)$ in $\breve{P}RadT\acute{N}$ $.$
\end{proof}

If $F$ is almost product structure on $\breve{N}$, then%
\begin{equation*}
\breve{P}=\frac{1}{\sqrt{2}}(I+\sqrt{5}F)
\end{equation*}%
is a Golden structure on $\breve{N}.$ Conversely, if $\breve{P}$ is a Golden
structure on $\breve{N}$, then 
\begin{equation*}
F=\frac{1}{\sqrt{5}}(2\breve{P}-I)
\end{equation*}%
is an almost product structure on $\breve{N}.$ We can give that follow
examples.

\begin{example}
Let $\breve{N}=%
%TCIMACRO{\U{211d} }%
%BeginExpansion
\mathbb{R}
%EndExpansion
_{2}^{4}\times 
%TCIMACRO{\U{211d} }%
%BeginExpansion
\mathbb{R}
%EndExpansion
_{2}^{4}$ be a semi-Riemannian product manifold with semi-Riemannian product
metric tensor $\bar{g}=\pi ^{\ast }g_{1}\otimes \sigma ^{\ast }g_{2},$ i=1,2
where $g_{i}$ denote standard metric tensor of $%
%TCIMACRO{\U{211d} }%
%BeginExpansion
\mathbb{R}
%EndExpansion
_{2}^{4}$. Let we get%
\begin{eqnarray*}
f:\acute{N} &\rightarrow &\breve{N}, \\
(x_{1},x_{2},x_{3}) &\rightarrow
&(x_{1},\,x_{2}+x_{3},\,x_{1},\,0,\,x_{1},\,0,\,x_{2}-x_{3},\,x_{1}).
\end{eqnarray*}%
Then, we find 
\begin{eqnarray*}
Z_{1} &=&\partial x_{1}+\partial x_{3}-\partial x_{5}-\partial x_{8}, \\
Z_{2} &=&\partial x_{2}+\partial x_{7}, \\
Z_{3} &=&\partial x_{2}-\partial x_{7}.
\end{eqnarray*}%
The radical bundle $RadT\acute{N}$ is spanned by $Z_{1}$ and $\ S(T\acute{N}%
)=Span\left\{ Z_{2},Z_{3}\right\} $ for $FZ_{2}=Z_{3}.$ If we choose 
\begin{equation*}
N=-\frac{1}{2}(\partial x_{1}-\partial x_{5}),
\end{equation*}%
we find 
\begin{equation*}
g(Z_{1},N)=1.
\end{equation*}%
Also we obtain 
\begin{eqnarray*}
FZ_{1} &=&\partial x_{1}+\partial x_{3}+\partial x_{5}+\partial x_{8}=W_{1},
\\
FN &=&-\frac{1}{2}(\partial x_{1}+\partial x_{5})=W_{2}.
\end{eqnarray*}%
If we choose%
\begin{equation*}
W_{3}=\partial x_{4}+\partial x_{6},
\end{equation*}%
we obtain%
\begin{equation*}
FW_{3}=W_{4}=\partial x_{4}-\partial x_{6}.
\end{equation*}%
Thus we have $FRadT\acute{N}\subset S(T\acute{N}^{\perp })$ and $FS(T\acute{N%
})=S(T\acute{N}).$From here, using $\breve{P}=\frac{1}{\sqrt{2}}(I+\sqrt{5}%
F),$ we find that 
\begin{eqnarray*}
\breve{P}Z_{1} &=&\left( \frac{1}{\sqrt{2}}\left( 
\begin{array}{c}
(1+\sqrt{5})\partial x_{1}+(1+\sqrt{5})\partial x_{2} \\ 
+(\sqrt{5}-1)\partial x_{5}+(\sqrt{5}-1)\partial x_{8}%
\end{array}%
\right) \right) \in S{\small (}T\acute{N}^{\perp }), \\
\breve{P}N &=&{\small -}\frac{1}{2\sqrt{2}}((1+\sqrt{5})\partial x_{1}+(%
\sqrt{5}-1)\partial x_{5})\in S(T\acute{N}^{\perp }), \\
\breve{P}Z_{2} &=&\frac{1}{\sqrt{2}}((1+\sqrt{5})\partial x_{2}+(\sqrt{5}%
-1)\partial x_{7})\in S(T\acute{N}), \\
\breve{P}W_{3} &=&\frac{1}{\sqrt{2}}((1+\sqrt{5})\partial x_{4}+(\sqrt{5}%
-1)\partial x_{8})\in S(T\acute{N}).
\end{eqnarray*}%
Therefore, $\acute{N}$ is a radical screen transversal lightlike submanifold
of $\breve{N}.$
\end{example}

{\small \ }

\begin{example}
Let $\breve{N}=%
%TCIMACRO{\U{211d} }%
%BeginExpansion
\mathbb{R}
%EndExpansion
_{2}^{4}\times 
%TCIMACRO{\U{211d} }%
%BeginExpansion
\mathbb{R}
%EndExpansion
_{2}^{4}$ be a semi-Riemannian product manifold with semi-Riemannian product
metric tensor $\bar{g}=\pi ^{\ast }g_{1}\otimes \sigma ^{\ast }g_{2},$ \
i=1,2 where $g_{i}$ denote standard metric tensor of $%
%TCIMACRO{\U{211d} }%
%BeginExpansion
\mathbb{R}
%EndExpansion
_{2}^{4}$. Let we get%
\begin{eqnarray*}
f &:&\acute{N}\rightarrow \breve{N}, \\
(x_{1},x_{2},x_{3}) &\rightarrow &(-x_{1},-x_{2},\,x_{1},\,\sqrt{2}%
x_{2},\,x_{1},\,0,\,0,\,x_{1},\,-x_{2}).
\end{eqnarray*}%
Then, we find 
\begin{eqnarray*}
Z_{1} &=&-\partial x_{1}+\partial x_{3}+\partial x_{5}+\partial x_{7} \\
Z_{2} &=&-\partial x_{2}-\sqrt{2}\partial x_{4}-\partial x_{8}.
\end{eqnarray*}%
The radical bundle $RadT\acute{N}$ is spanned by $Z_{1}$ and $\ S(T\acute{N}%
)=Span\left\{ Z_{2}\right\} $. If we choose 
\begin{equation*}
N=\frac{1}{2}(\partial x_{1}-\partial x_{5}),
\end{equation*}%
we find 
\begin{equation*}
g(Z_{1},N)=1.
\end{equation*}%
Thus, we obtain 
\begin{eqnarray*}
FZ_{1} &=&-\partial x_{1}+\partial x_{3}-\partial x_{5}-\partial x_{7}=W_{1},
\\
FZ_{2} &=&-\partial x_{2}-\sqrt{2}\partial x_{4}+\partial x_{8}=W_{2}, \\
FN &=&\frac{1}{2}(\partial x_{1}+\partial x_{5})=W_{3}.
\end{eqnarray*}%
If we choose%
\begin{equation*}
W_{4}=-\sqrt{2}\partial x_{2}-\sqrt{2}\partial x_{4}+\partial x_{6},
\end{equation*}%
we obtain%
\begin{equation*}
FW_{3}=W_{4}=\partial x_{4}-\partial x_{6}.
\end{equation*}%
Thus, we have $FRadT\acute{N}\subset S(T\acute{N}^{\perp })$ and $FS(T\acute{%
N})=S(T\acute{N}^{\perp }).$ From here, using $\breve{P}=\frac{1}{\sqrt{2}}%
(I+\sqrt{5}F),$ we find that%
\begin{eqnarray*}
\breve{P}Z_{1} &=&\left( \frac{1}{\sqrt{2}}\left( 
\begin{array}{c}
-(1+\sqrt{5})\partial x_{1}+(1+\sqrt{5})\partial x_{2} \\ 
+(1-\sqrt{5})\partial x_{5}+(1-\sqrt{5})\partial x_{7}%
\end{array}%
\right) \right) \in S(T\acute{N}^{\perp }), \\
\breve{P}Z_{2} &=&\left( \frac{1}{\sqrt{2}}\left( 
\begin{array}{c}
-(1+\sqrt{5})\partial x_{2}-\sqrt{2}(1+\sqrt{5})\partial x_{4} \\ 
+(\sqrt{5}-1)\partial x_{8}%
\end{array}%
\right) \right) \in S(T\acute{N}^{\perp }).
\end{eqnarray*}%
Therefore, $\acute{N}$ is a radical transversal lightlike submanifold.
\end{example}

\end{document}